\def\Cc{\mathcal{C}}
\def\Cat{\mathrm{Cat}}
\def\Ee{\mathcal{E}}
\def\Bb{\mathcal{B}}
\def\el{\mathrm{el}}
\def\Cov{\mathrm{Cov}}
\def\Adj{\mathrm{Adj}}
\def\lrto{ \leftrightarrows }
\def\rlto{ \rightleftarrows }
\def\Ll{\mathcal{L}}
\def\Rr{\mathcal{R}}
\def\Sh{\mathrm{Sh}}
\def\Dd{\mathcal{D}}
\def\Sing{\mathrm{Sing}}
\def\Dec{\mathrm{Dec}}
\def\FF{\mathfrak{F}}
\def\Uu{\mathcal{U}}
\def\Vv{\mathcal{V}}
\def\Oo{\mathcal{O}}
\def\opd{\textrm{-ops}}
\def\Fop{\FF\opd}
\def\Top{ \mathrm{Top} }
\def\Set{ \mathrm{Set} }
\def\Gpd{ \mathrm{Gpd} }
\def\inc{ \hookrightarrow }
\def\Ob{ \mathrm{Ob} }
\def\Fcyc{ \FF_{cyc} }
\def\Fmod{ \FF_{mod} }
\def\op{\mathrm{op}}
\def\ito{ \rightarrowtail }
\def\colim{ \mathrm{colim} }
\def\Oo{ \mathcal{O} }
\def\Alg{ \mathrm{Alg} }
\def\NN { \mathbb{N} }
\def\Aut{ \mathrm{Aut} }
\theoremstyle{plain}
\newtheorem{theorem}{Theorem}
\newtheorem{dfn}{Definition}[section]
\newtheorem{thm}[dfn]{Theorem}
\newtheorem{lma}[dfn]{Lemma}
\newtheorem{prp}[dfn]{Proposition}
\newtheorem{cor}[dfn]{Corollary}
\theoremstyle{remark}
\newtheorem{rmk}[dfn]{Remark}
\newtheorem{sct}[dfn]{}
\title{Comprehensive factorisation systems}
\author{Clemens Berger}
\address{Universit\'{e} de Nice, Lab. J. A. Dieudonn\'{e}, Parc Valrose, 06108 Nice Cedex, France}
\email{cberger@math.unice.fr}
\author{Ralph M. Kaufmann}
\address{Purdue University Department of Mathematics, West Lafayette, IN 47907U}
\email{rkaufman@math.purdue.edu}
\keywords{Comprehension scheme, orthogonal factorisation system, Feynman category, modular operad, universal covering, Galois theory}
\date{November 23, 2017}
\subjclass{Primary 18A25, 18A32; Secondary 18D50, 12F10}
\begin{document}

\begin{abstract}We establish a correspondence between consistent comprehension schemes and complete orthogonal factorisation systems. The comprehensive factorisation of a functor between small categories arises in this way. Similar factorisation systems exist for the categories of topological spaces, simplicial sets, small multicategories and Feynman categories. In each case comprehensive factorisation induces a natural notion of universal covering, leading to a Galois-type definition of fundamental group for based objects of the category.\end{abstract}

\maketitle

\begin{center}\emph{Dedicated to Peter Freyd and Bill Lawvere on the occasion of their 80$^\mathit{th}$ birthdays}\end{center}

\section*{Introduction}\label{Intro}

The main purpose of this text is to promote a somewhat unusual point of view on orthogonal factorisation systems, based on a minor variation of Lawvere's notion of comprehension scheme \cite{Law1}. A \emph{comprehension scheme} $P$ on a category $\Ee$ assigns to each object $A$ of $\Ee$ a category $PA$ with terminal object $\star_{PA}$, and to each morphism $f:A\to B$ of $\Ee$ an adjunction $f_!:PA\lrto PB:f^*$ satisfying certain axioms. These axioms imply the existence of a full subcategory $\Cov_B$ of the slice category $\Ee/B$ consisting of so called \emph{$P$-coverings}, together with an equivalence of categories $\Cov_B\simeq PB$. A morphism $f:A\to B$ is called \emph{$P$-connected} if $f_!(\star_{PA})\cong\star_{PB}$.\vspace{1ex}

Our main result reads as follows (cf. Theorems \ref{main} and \ref{mainbis}):

\begin{theorem}\label{mainintro}Every consistent comprehension scheme $P$ on $\Ee$ induces a complete orthogonal factorisation system on $\Ee$ with left part consisting of $P$-connected morphisms and right part consisting of $P$-coverings.

Conversely, every complete orthogonal factorisation system on $\Ee$ arises in this way from an essentially unique consistent comprehension scheme on $\Ee$.\end{theorem}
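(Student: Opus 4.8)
The plan is to prove the two directions separately and then verify that the constructions are mutually inverse. For the forward direction, starting from a consistent comprehension scheme $P$, I would take the right class $\Rr$ to consist of the $P$-coverings (those $q\colon Y\to B$ lying in $\Cov_B$) and the left class $\Ll$ to consist of the $P$-connected morphisms. The factorisation of a morphism $f\colon A\to B$ is dictated by the adjunction $f_!\colon PA\lrto PB\colon f^*$: the object $f_!(\star_{PA})\in PB$ corresponds under $\Cov_B\simeq PB$ to a covering $p\colon E\to B$, and the unit $\star_{PA}\to f^*f_!(\star_{PA})$ transposes to a map $\ell\colon A\to E$ over $B$, so that $f=p\circ\ell$. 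Checking that $\ell$ is $P$-connected, i.e.\ $\ell_!(\star_{PA})\cong\star_{PE}$, is where the triangle identities and the preservation of terminal objects by $f^*$ enter.

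The orthogonality is the step I expect to be the main obstacle, even though the mechanism is clean once set up. Given a square with $P$-connected left edge $u\colon A\to A'$, $P$-covering right edge $q\colon Y\to B$, and bottom edge $b\colon A'\to B$, a diagonal filler $A'\to Y$ over $B$ is the same as a section $\star_{PA'}\to b^*(q)$, and the chain of natural bijections
\[
\mathrm{Hom}_{PA}(\star_{PA},u^*b^*q)\cong\mathrm{Hom}_{PA'}(u_!\star_{PA},b^*q)\cong\mathrm{Hom}_{PA'}(\star_{PA'},b^*q),
\]
using $u_!\star_{PA}\cong\star_{PA'}$, turns the given data (a section over $A$) into a unique filler. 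The delicate points are that $b^*(q)$ must be available — this is exactly what completeness records, namely that $f^*$ is everywhere defined and computes base change of coverings — and that the isomorphism be natural enough to read as precomposition with $u$; consistency of $P$ is used precisely here.

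For the converse, given a complete orthogonal factorisation system $(\Ll,\Rr)$, I would define $PB$ to be the full subcategory of $\Ee/B$ on the $\Rr$-morphisms, with terminal object $\mathrm{id}_B$. I would set $f^*$ to be pullback along $f$ (well defined and valued in $\Rr$ by completeness) and $f_!$ to be the operation that composes an $\Rr$-morphism with $f$ and returns the $\Rr$-part of its $(\Ll,\Rr)$-factorisation. The adjunction $f_!\dashv f^*$ is then forced: a map $f_!(p)\to q$ over $B$ corresponds, by orthogonality of the $\Ll$-part of $f\circ p$ against $q\in\Rr$, to a map $f\circ p\to q$ in $\Ee/B$, hence by the universal property of the pullback to a map $p\to f^*q$ over $A$. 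The comprehension axioms reduce to routine compatibilities of pullback with factorisation, and consistency is the pullback-stability guaranteed by completeness.

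Finally I would close the loop. Feeding the scheme built from $(\Ll,\Rr)$ back through the forward construction returns $\Ll$ as the connected maps and $\Rr$ as the coverings, because factoring $f$ through the covering $f_!(\mathrm{id}_A)$ reproduces the $(\Ll,\Rr)$-factorisation of $f$. In the other order, running the backward construction on the system induced by $P$ gives a scheme $P'$ with $P'B=\{\Rr\text{-morphisms over }B\}=\Cov_B\simeq PB$ and with $f^*$ equal to base change, which agrees with the original $f^*$ by consistency; hence $P'\simeq P$. This yields essential uniqueness: any consistent scheme inducing a given system has each $PB$ equivalent to the category of $\Rr$-morphisms over $B$, with $f^*$ base change and $f_!$ its left adjoint, all determined up to equivalence by the system. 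The only care needed is to make these equivalences natural in $B$, so that the reconstructed scheme is equivalent to $P$ as a comprehension scheme and not merely fibrewise.
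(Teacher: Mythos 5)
Your overall architecture coincides with the paper's, which splits the statement into Theorems \ref{main} and \ref{mainbis}: the forward direction factors $f$ through the covering associated to $f_!(\star_{PA})$ and proves orthogonality by transposing along the adjunctions; the converse takes $P_{(\Ll,\Rr)}B=(\Ee/B)_\Rr$ with $f_!$ given by factorisation and $f^*$ by pullback; and uniqueness comes from $PB\simeq\Cov_B=(\Ee/B)_\Rr$. The genuine gap is where you locate the use of consistency. You claim that the $P$-connectedness of the comparison map $\ell\colon A\to E$ follows from ``the triangle identities and the preservation of terminal objects by $f^*$''. It does not, and this is exactly the step where consistency is indispensable (Lemma \ref{factorisation}(b) of the paper): from $f=p\circ\ell$ one only knows that $p_!\ell_!(\star_{PA})\cong f_!(\star_{PA})\cong p_!(\star_{PE})$, and to descend from this to $\ell_!(\star_{PA})\cong\star_{PE}$ one needs $p_!$ to be fully faithful on morphisms with terminal codomain, which is precisely what consistency ($p_!\colon PE\simeq PB/f_!(\star_{PA})$ for the covering $p$) provides. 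Without consistency the step, and indeed the theorem, fails: by Proposition \ref{consistent} a non-consistent scheme has coverings that do not compose or are not left cancellable, whereas the right class of any orthogonal factorisation system has both properties, so ($P$-connected, $P$-covering) could not be such a system.

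Symmetrically, the step where you do invoke consistency --- the naturality needed to read the orthogonality bijection as ``precomposition with $u$'' --- does not require it: that naturality is part of the universal property of the object of elements (Lemma \ref{elements}) and holds for every comprehension scheme; the paper's orthogonality argument likewise uses only Lemma \ref{elements} and Lemma \ref{factorisation}(a), no consistency. A second, smaller misattribution occurs in the converse direction: consistency of $P_{(\Ll,\Rr)}$ is not ``the pullback-stability guaranteed by completeness'' but the fact that $\Rr$-morphisms compose and are left cancellable (general properties of orthogonal factorisation systems), which yields consistency via Proposition \ref{consistent}; completeness is needed elsewhere, namely for $f^*$ to exist at all. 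With the uses of consistency relocated to the right places, your plan goes through and is essentially the paper's proof.
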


A comprehension scheme is \emph{consistent} precisely when $P$-coverings compose and are left cancellable, cf. Proposition \ref{consistent}. A factorisation system $(\Ll,\Rr)$ (cf. Freyd-Kelly \cite{FK}) is \emph{complete} if pullbacks of $\Rr$-morphisms exist along any morphism of $\Ee$. We show that under this correspondence the comprehension scheme $P$ satisfies \emph{Frobenius reciprocity} in the sense of Lawvere \cite{Law1} if and only if $P$-connected morphisms are stable under pullback along $P$-coverings.

Street and Walters' comprehensive factorisation of a functor \cite{StWa} into an initial functor followed by a discrete opfibration arises in this way from the comprehension scheme which assigns to a category $A$ the category $\Set^A$ of set-valued diagrams. Dually, the comprehension scheme assigning to a category $A$ the category $\Set^{A^\op}$ of set-valued presheaves induces the factorisation into a final functor followed by a discrete fibration. In both cases, the axioms of a comprehension scheme amount to the existence of the \emph{category of elements} for any set-valued diagram, resp. presheaf. It was Lawvere's insight \cite{Law1} that the existence of these discrete Grothendieck constructions is encoded by the existence of a certain adjunction.

One of the leading motivations of this text has been the recent construction by the second author and Lucas \cite{KL} of decorated Feynman categories, which play the role of \emph{Feynman categories of elements}. A Feynman category is a special kind of symmetric monoidal category, and there is a comprehension scheme assigning to a Feynman category $\FF$ the category of strong symmetric monoidal set-valued functors on $\FF$. The resulting comprehensive factorisation of a Feynman functor sheds light on Markl's recent definition of non-$\Sigma$-modular operads \cite{Ma}. Through the $2$-equivalence between Feynman categories and small multicategories (also called coloured operads) we obtain a comprehensive factorisation of a multifunctor which directly extends Street and Walters' comprehensive factorisation of a functor.

Another instructive example is the comprehension scheme which assigns to a well-behaved topological space $A$ the category $\Sh_{loc}(A)$ of locally constant set-valued sheaves on $A$. The resulting comprehensive factorisation factors a continuous map into a map with connected homotopy fibres followed by a topological covering. The category of simplicial sets carries a similar comprehension scheme. The induced simplicial coverings are precisely the Kan fibrations with discrete fibres.

The last two examples suggest that categories $\Ee$ admitting a ``discrete'' comprehension scheme $P$ (i.e. such that the value of $P$ at a terminal object $\star_\Ee$ of $\Ee$ is the category of sets) can be investigated from a Galois-theoretical perspective. We undertake first steps in this direction. We define discrete, connected and locally connected objects using the comprehensive factorisation. Moreover, any based object $\alpha:\star_\Ee\to A$ admits a \emph{universal $P$-covering} $\Uu_{\alpha}\to A$, obtained by comprehensive factorisation of $\alpha$. The group of deck transformations of this universal covering is a natural candidate for the fundamental group $\pi_1(A,\alpha)$. We explore this definition in the aforementioned cases and show that a faithful fibre functor $\alpha^*:PA\to\Set$ factors through the category of $\pi_1(A,\alpha)$-sets whenever the comprehension scheme satisfies Frobenius reciprocity. We give a sufficient condition for monadicity of fibre functors, closely related to Grothendieck's axiomatisation of Galois theory \cite{Gr}.\vspace{1ex}

This article is organised as follows:\vspace{1ex}

Section \ref{scheme} establishes the correspondence between consistent comprehension sche-mes and complete orthogonal factorisation systems. We discuss Frobenius reciprocity and define restriction and extension of comprehension schemes.

Section \ref{feynman} investigates the standard comprehension scheme for Feynman categories and small multicategories, with an application to modular operads.

Section \ref{galois} studies Galois-theoretical aspects of categories with discrete comprehension scheme. We define natural $\pi_0$- and $\pi_1$-functors and investigate fibre functors and their monadicity.

\section{Comprehension schemes and factorisation systems}\label{scheme}

By \emph{comprehension scheme} on a category $\Ee$ we mean a pseudo-functor $P:\Ee\to\Adj_*$ assigning to each object $A$ of $\Ee$ a category $PA$ with distinguished terminal object $\star_{PA}$, and to each morphism $f:A\to B$ an adjuntion $f_!:PA\lrto PB:f^*$ such that Lawvere's comprehension functor
\begin{gather*}
\xymatrix@R=0cm{
 c_B:\Ee/B \ar[r] &  PB\\
 (f:A\to B)\ar@{|->}[r] & f_!(\star_{PA})
}\end{gather*}
has a \emph{fully faithful} right adjoint $p_B:PB\to\Ee/B.$

Note that the functoriality of $c_B$ follows from the existence of unique morphisms $f_!(\star_{PA})\to\star_{PA'}$ in $PA'$ for each $A\to A'$ in $\Ee$.

The unit of the $(c_B,p_B)$-adjunction  at $f:A\to B$ yields a factorisation
\begin{equation}\label{unit}\xymatrix{
 A \ar[r]^{\eta_f}\ar[d]_f &  \el_B(c_B(f))\ar[ld]^{p_Bc_B(f)}\\
 B
}\end{equation}and we say that $f:A\to B$ is a \emph{$P$-covering} if $\eta_f$ is invertible. A morphism $f:A\to B$ of $\Ee$ is said to be \emph{$P$-connected} if $f_!(\star_{PA})\cong\star_{PB}$.

The full subcategory of $\Ee/B$ spanned by the $P$-coverings will be denoted $\Cov_B$. In particular, we have an equivalence of categories $\Cov_B\simeq PB$ for each object $B$.  The comprehension scheme is said to be \emph{consistent} if each $P$-covering $f:A\to B$ induces an equivalence of categories $f_!:PA\simeq PB/f_!(\star_{PA})$. \vspace{1ex}

Let us mention here that what we call a comprehension scheme on $\Ee$ is precisely what Jacobs \cite[Example 4.18]{J} calls a \emph{full Lawvere category} over $\Ee$ showing that this notion is a special case of Ehrhard's \emph{$D$-categories} \cite{E} which are renamed \emph{comprehensive categories with unit}. It is noticeable that a certain amount of our results (such as Lemma \ref{elements}) carry over to the more general context of comprehension categories with unit where the existence of left adjoint functors $f_!$ is not required.

\begin{lma}[cf. \cite{Jbook}, Lemma 10.4.9(i)]\label{elements}The existence of a right adjoint $p_B:PB\to\Ee/B$ of $c_B$ amounts to the existence (for each object $X$ of $PB$) of an \emph{object of elements} $\el_B(X)$ over $B$ having the universal property that for each $h:A\to B$ in $\Ee$ there is a bijection between morphisms $\star_{PA}\to h^*(X)$ in $PA$ and liftings in $\Ee$$$\xymatrix{
 &\el_B(X)\ar[d]^{p_B(X)}\\
 A\ar[r]_h\ar@{.>}[ru]&B
}$$which is natural with respect to morphisms $X\to Y$ in $PB$.\end{lma}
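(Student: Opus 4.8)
The plan is to read the statement as the pointwise representability criterion for the existence of the right adjoint $p_B$, and then to transport it across the fibrewise adjunctions $h_!\dashv h^*$. Recall that $c_B$ admits a right adjoint precisely when, for each object $X$ of $PB$, the presheaf $(\Ee/B)^\op\to\Set$ sending $(h:A\to B)$ to $PB(c_B(h),X)$ is representable; the representing object is then the value $p_B(X)$, and functoriality of $X\mapsto p_B(X)$ together with the adjunction isomorphism correspond to the representations being natural in $X$. I would therefore denote by $\el_B(X)$ the domain of the structure morphism $p_B(X):\el_B(X)\to B$, so that the hom-set $\Ee/B(h,p_B(X))$ is exactly the set of liftings displayed in the triangle of the statement.

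The heart of the argument is the chain of bijections
\[
\Ee/B(h,p_B(X))\;\cong\;PB(c_B(h),X)\;=\;PB(h_!(\star_{PA}),X)\;\cong\;PA(\star_{PA},h^*(X)),
\]
where the first bijection is the $(c_B,p_B)$-adjunction, the middle equality is the definition of $c_B$, and the last bijection is the fibrewise adjunction $h_!\dashv h^*$. The left-hand set is the set of liftings of $h$ along $p_B(X)$, and the right-hand set is the set of morphisms $\star_{PA}\to h^*(X)$ in $PA$; establishing the lemma amounts, in one direction, to showing that this composite bijection is natural in $X$ (whence functoriality of $p_B$ and the adjunction), and, in the other direction, to showing that a pointwise object of elements with the stated universal property, set as $p_B(X)=(\el_B(X)\to B)$, assembles into a genuine right adjoint. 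Both directions are the two faces of the same representability statement, so the substance is entirely contained in the displayed chain.

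The step I expect to require the most care is the naturality in the test object $h$, which is implicit in calling $\el_B(X)$ an object \emph{over} $B$ with a universal property valid \emph{for each} $h:A\to B$. Given a morphism $g:A'\to A$ over $B$, so that $hg=h'$, compatibility of the displayed bijection with precomposition by $g$ relies on the pseudofunctorial coherence $h'^*\cong g^*h^*$ of $P$ together with the fact that the right adjoint $g^*$ preserves the terminal object, giving $g^*(\star_{PA})\cong\star_{PA'}$ — equivalently, the canonical morphisms $g_!(\star_{PA'})\to\star_{PA}$ that were used above to make $c_B$ functorial. Once this naturality in $h$ is checked, the two adjunction bijections compose into a single isomorphism of presheaves on $\Ee/B$ that is simultaneously natural in $X$, and representability of this presheaf by $p_B(X)$ is then literally the asserted universal property of $\el_B(X)$.
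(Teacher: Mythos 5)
Your proposal is correct and takes essentially the same route as the paper: the paper's proof likewise identifies morphisms $\star_{PA}\to h^*(X)$ with morphisms $c_B(h)=h_!(\star_{PA})\to X$ via the fibrewise adjunction, reads the stated universal property as the bijection $PB(c_B(h),X)\cong\Ee/B(h,p_B(X))$ with liftings, and concludes by the standard pointwise-representability fact that naturality in one variable suffices to assemble the right adjoint. Your more detailed discussion of naturality in the test object $h$ (via pseudofunctoriality and the canonical maps $g_!(\star_{PA'})\to\star_{PA}$) is precisely what the paper compresses into its closing sentence.
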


\begin{proof}Morphisms $\star_{PA}\to h^*(X)$ are in one-to-one correspondence with morphisms $c_B(h)=h_!(\star_{PA})\to X$ so that the condition above expresses that the latter correspond to morphisms $h\to p_B(X)$ in $\Ee/B$. Naturality in one variable suffices.\end{proof}

\begin{lma}\label{slice}A comprehension scheme $P$ on $\Ee$ is consistent precisely when for each $B$ in $\Ee$ and each $X$ in $PB$, the map $P\el_B(X)\to PB/c_Bp_B(X)\to PB/X$ is an equivalence of categories.\end{lma}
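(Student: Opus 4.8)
The plan is to recognise both of the displayed maps as pieces of structure that are already at hand, and thereby reduce the statement to the definition of consistency applied to the single $P$-covering $p_B(X)$. The first step is to observe that the second map $PB/c_Bp_B(X)\to PB/X$ is \emph{always} an equivalence, independently of consistency. Indeed, since $p_B$ is fully faithful, the counit $\varepsilon:c_Bp_B\Rightarrow\mathrm{id}_{PB}$ of the $(c_B,p_B)$-adjunction is invertible; its component $\varepsilon_X:c_Bp_B(X)\to X$ is thus an isomorphism, and postcomposition with $\varepsilon_X$ induces the asserted equivalence of slice categories. Consequently all the content lies in the first map, and the whole condition is equivalent to asking that $P\el_B(X)\to PB/c_Bp_B(X)$ be an equivalence for every $B$ and every $X$ in $PB$.

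Next I would identify this first map explicitly. Writing $\pi_X:=p_B(X):\el_B(X)\to B$ for the projection, every object $Z$ of $P\el_B(X)$ carries a unique morphism to the terminal object $\star_{P\el_B(X)}$, whence a canonical morphism $(\pi_X)_!(Z)\to(\pi_X)_!(\star_{P\el_B(X)})=c_Bp_B(X)$; this is precisely the functor $P\el_B(X)\to PB/c_Bp_B(X)$ of the statement, namely $(\pi_X)_!$ lifted into the slice over its value at the terminal object. The key preliminary point is that $\pi_X=p_B(X)$ is itself a $P$-covering: the triangle identity $p_B(\varepsilon_X)\circ\eta_{p_B(X)}=\mathrm{id}_{p_B(X)}$ together with invertibility of $\varepsilon_X$ forces the unit $\eta_{p_B(X)}$ to be invertible, so $p_B(X)\in\Cov_B$.

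With these identifications both directions are immediate. For the forward direction, consistency applied to the $P$-covering $f=p_B(X)$ asserts exactly that $f_!:P\el_B(X)\simeq PB/f_!(\star_{P\el_B(X)})=PB/c_Bp_B(X)$ is an equivalence, which is the first map; composing with the always-equivalence of the first paragraph gives the claim. Conversely, suppose the condition holds for all $X$, and let $f:A\to B$ be an arbitrary $P$-covering. Put $X:=c_B(f)=f_!(\star_{PA})$; invertibility of $\eta_f$ provides an isomorphism $\phi:A\to\el_B(X)$ in $\Ee/B$ with $\pi_X\circ\phi=f$, so that $f_!\cong(\pi_X)_!\circ\phi_!$ as functors into $PB/X$. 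Since $\phi$ is invertible, $\phi_!$ is an equivalence, and $(\pi_X)_!$ is an equivalence by hypothesis; hence $f_!:PA\simeq PB/f_!(\star_{PA})$ is an equivalence, i.e. $P$ is consistent.

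I expect the only delicate part to be the bookkeeping of the pseudo-functorial coherence isomorphisms: verifying that the lifted functors really land in the indicated slice categories, and that the comparison $f_!\cong(\pi_X)_!\circ\phi_!$ is compatible with the identifications $f_!(\star_{PA})\cong c_Bp_B(X)\cong X$ and with the maps-to-the-terminal-object used to build the lifts. All of this is routine and I would suppress the explicit coherence computations.
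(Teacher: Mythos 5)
Your proposal is correct and follows essentially the same route as the paper's proof: the second map is always an equivalence because the counit $\varepsilon_X$ is invertible ($p_B$ being fully faithful), and the first map is an equivalence precisely by consistency applied to the $P$-covering $p_B(X)$, together with the fact that every $P$-covering is, up to isomorphism over $B$, of this form. The only difference is that you spell out two points the paper leaves implicit --- the triangle-identity argument showing $p_B(X)$ is a $P$-covering, and the transport along the isomorphism $\phi:A\to\el_B(X)$ in the converse direction --- both of which are correct.
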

\begin{proof}Since $p_B(X):\el_B(X)\to B$ is a $P$-covering, and up to isomorphism over $B$ any $P$-covering is of this form, consistency amounts to the condition that $p_B(X)_!:P\el_B(X)\simeq PB/c_Bp_B(X)$ is an equivalence of categories. By definition of a comprehension scheme, the counit $\epsilon_X:c_Bp_B(X)\to X$ is always an isomorphism, and hence the second map above is always an equivalence of categories.\end{proof}

\begin{sct}\textbf{Category of elements.}\label{Cat} The special case where $\Ee=\Cat$ is the category of small categories serves as guideline throughout. We have two comprehension schemes here, given respectively by $PA=\Set^A$ and $P'A=\Set^{A^\op}$. In both cases, the right adjoint is restriction and the left adjoint is given by left Kan extension.

The universal property of $\el_B(X)$, as stated in Lemma \ref{elements}, is satisfied by the comma category $\star\downarrow X$ where $\star$ denotes a singleton functor on the terminal category, cf. Street-Walters \cite{StWa}. This comma category is often called the \emph{category of elements} of $X$. It is a special case of the Grothendieck construction of a functor. In the covariant case, the objects of $\el_B(X)$ are pairs $(b\in B,x\in X(b))$ and the morphisms $(b,x)\to(b',x')$ are those $\phi:b\to b'$ in $B$ for which equality $X(\phi)(x)=x'$ holds. In the contravariant case, one has to dualise twice in order to get $p_B(X):\el_B(X)\to B$.

For each category $B$ and each $X\in\Set^B$, the counit $c_B(p_B(X))\to X$ is invertible (i.e. the right adjoint $p_B:\Set^B\to\Cat/B$ is fully faithful) because any functor $X:B\to\Set$ may be identified with the left Kan extension of the singleton functor $\star:\el_B(X)\to\Set$ along the projection $\el_B(X)\to B$, cf. \cite[Proposition 1]{StWa}.

The comprehension schemes $P,P'$ are consistent, cf. Lemma \ref{slice}. It suffices to consider the functor $p_B(X):\el_B(X)\to B$ defined by a diagram $X:B\to\Set$ (resp. presheaf $X:B^\op\to\Set$). It can be checked by hand that the induced functor $\Set^{\el_B(X)}\to\Set^B/X$ (resp. $\Set^{\el_B(X)^\op}\to\Set^{B^\op}/X$) is an equivalence.

$P$-coverings are precisely discrete opfibrations, and $P'$-coverings precisely discrete fibrations. These two classes of functors compose and are left cancellable so that Proposition \ref{consistent} below is an alternative way to extablish consistency.\end{sct}

\begin{sct}\textbf{Powerset comprehension scheme.} Another example, certainly motivating Lawvere \cite{Law1}, is the powerset functor $P:\Set\to\Adj_*$ assigning to a set $A$ its powerset $PA$, considered as a category via its poset structure. The adjunction $f_!:PA\lrto PB:f^*$ is given by direct/inverse image. The comprehension functor $c_B:\Set/B\to PB$ assigns to a mapping $f:A\to B$ its image $f(A)\in PB$ and the right adjoint functor $p_B:PB\to \Set/B$ assigns to a subset its subset-inclusion. The $P$-coverings are precisely the injective mappings. The comprehension scheme is consistent because any injective mapping $f:A\to B$ induces an isomorphism $PA\cong PB/f(A)$. This follows also from Proposition \ref{consistent} below because injective mappings are composable and left cancellable.\end{sct}

\begin{prp}\label{consistent}A comprehension scheme $P$ is consistent if and only if $P$-coverings compose and are left cancellable.\end{prp}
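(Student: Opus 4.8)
The plan is to run both implications through the reflective description of coverings. Since $p_B$ is fully faithful its counit is invertible, so $c_B\dashv p_B$ exhibits $\Cov_B\simeq PB$ as a reflective subcategory of $\Ee/B$ with reflector $R_B=p_Bc_B$ and reflection unit $\eta$; a morphism $f$ is a $P$-covering exactly when $\eta_f$ is invertible. Everything will follow from a single compatibility of objects of elements with left adjoints along coverings, which I record as property $(\dagger)$: for each covering $q:E\to B$ and each $X$ in $PE$, the coverings $q\circ p_E(X)$ and $p_B(q_!(X))$ of $B$ are isomorphic over $E$; equivalently $\el_B(q_!(X))\cong\el_E(X)$ in $\Ee/E$.

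I would first establish $(\dagger)$ assuming consistency. By Lemma \ref{elements} it suffices to exhibit, naturally in $a:A\to B$, a bijection between liftings of $a$ through $q\circ p_E(X)$ and morphisms $\star_{PA}\to a^*(q_!(X))$. A lifting of $a$ decomposes uniquely as a lifting $b:A\to E$ of $a$ through $q=p_B(c_B(q))$ followed by a lifting of $b$ through $p_E(X)$; by Lemma \ref{elements} these are classified by a morphism $\beta:\star_{PA}\to a^*(c_B(q))$ together with a morphism $\star_{PA}\to b^*(X)$, the latter transposing along $b_!\dashv b^*$ to an element of $PE(c_E(b),X)$. On the other side, $a_!\dashv a^*$ identifies the morphisms $\star_{PA}\to a^*(q_!(X))$ with $PB(c_B(a),q_!(X))$, which splits as a coproduct indexed by the structure maps $c_B(a)\to c_B(q)$; since $q_!(c_E(b))\cong c_B(a)$ whenever $qb=a$, with canonical map to $c_B(q)$ realising $\beta$, the summand at $\beta$ is $(PB/c_B(q))(q_!(c_E(b)),q_!(X))$. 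Consistency says $q_!:PE\to PB/c_B(q)$ is an equivalence (Lemma \ref{slice}), hence fully faithful, and this is precisely the required bijection $PE(c_E(b),X)\cong(PB/c_B(q))(q_!(c_E(b)),q_!(X))$ fibre by fibre; summing over $\beta$ yields $(\dagger)$, the resulting comparison being the unit $\eta_{q\circ p_E(X)}$.

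Granting $(\dagger)$, consistency implies the two closure properties. For composition, coverings $q:E\to B$ and $h:C\to E$ give $h\cong p_E(c_E(h))$, hence $qh\cong q\circ p_E(c_E(h))\cong p_B(q_!(c_E(h)))=p_B(c_B(qh))$, so $\eta_{qh}$ is invertible. For left cancellation, let $g=qk$ with $g$ and $q$ coverings; then $c_B(g)=q_!(c_E(k))$, the invertible unit gives $g\cong p_B(c_B(g))$ over $B$, and $(\dagger)$ rewrites the right-hand side over $E$ as $q\circ p_E(c_E(k))$; chasing the map to $E$ shows $k$ factors as $p_E(c_E(k))$ precomposed with an isomorphism over $E$, so $\eta_k$ is invertible and $k$ is a covering. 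Conversely, assume coverings compose and are left cancellable and fix a covering $q:E\to B$ with $X=c_B(q)$, so $\el_B(X)\cong E$. Identifying $(\Ee/B)/q$ with $\Ee/E$, the slice $PB/X$ becomes the full subcategory of $\Ee/E$ on those $k$ with $qk$ a covering, while $PE\simeq\Cov_E$ is the full subcategory on coverings; composition and left cancellation supply the two inclusions, so these subcategories coincide. Under this identification the functor $q_!:P\el_B(X)\to PB/X$ of Lemma \ref{slice} is the reflector restricted to $\Cov_E$, hence naturally isomorphic to the identity and in particular an equivalence; Lemma \ref{slice} then gives consistency.

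The delicate point is $(\dagger)$. One cannot read composition or cancellation off the equivalence $\Cov_E\simeq PB/c_B(q)$ formally, since that equivalence is the reflection rather than the inclusion into $\Ee/E$, and a reflector restricting to an equivalence on a full subcategory need not have invertible unit there (a two-element poset with its reflective terminal object already gives a counterexample). The genuine input is the fibrewise matching above, in which full faithfulness of $q_!$—not merely its essential surjectivity—is what forces the unit $\eta_{q\circ p_E(X)}$ to be invertible, and hence forces $q\circ p_E(X)$ to be a covering.
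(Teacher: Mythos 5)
Your proof is correct and takes essentially the same route as the paper's: your property $(\dagger)$, proved by decomposing liftings fibrewise over classifying morphisms $c_B(a)\to c_B(q)$ and invoking full faithfulness of $q_!:PE\to PB/c_B(q)$, is exactly the paper's analysis of liftings via ``$f$-universal'' objects, and your converse direction --- identifying $\Cov_B/q$ with $\Cov_E$ inside $\Ee/E$ by means of the two closure properties and then checking that $q_!$ corresponds to the reflector --- is the paper's pseudocommuting-square argument made explicit. The compatibilities you leave implicit (e.g.\ that the classifying morphism of a lifting $b$ of $a$ through $q$ is $q_!$ applied to the terminal map, modulo $q_!b_!\cong a_!$) are the same ones the paper uses without further comment, so there is no gap.
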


\begin{proof}Since by definition the left adjoints of a comprehension scheme $P$ compose up to isomorphism, for each morphism $g:B\to C$ the following square of functors$$\xymatrix{\Ee/B\ar[d]_{c_B}\ar[r]^{g\circ -}&\Ee/C\ar[d]^{c_C}\\PB\ar[r]_{g_!}&PC}$$pseudocommutes (i.e. commutes up to isomorphism). If $g$ is a $P$-covering, and $P$-coverings compose, then we get by restriction a pseudocommuting square$$\xymatrix{\Cov_B\ar[d]_{\simeq}\ar[r]^{g\circ -}&\Cov_C\ar[d]^{\simeq}\\PB\ar[r]_{g_!}&PC}$$with vertical equivalences. The latter induces a pseudocommuting square
$$\xymatrix{\Cov_B\ar[d]_{\simeq}\ar[r]^{g\circ -}&\Cov_C/g\ar[d]^{\simeq}\\PB\ar[r]_{g_!}&PC/g_!(\star_{PB})}$$in which the upper horizontal functor is an equivalence (even an isomorphism) whenever $P$-coverings are left cancellable. Therefore, if $P$-coverings compose and are left cancellable, then $P$ is consistent.

Conversely, consider composable morphisms $A\xrightarrow{f}B\xrightarrow{g}C$ in $\Ee$. We shall say that $f_!(\star_{PA})$ is $f$-universal, if liftings of $h:D\to B$ to $f:A\to B$ correspond one-to-one to morphisms $h_!(\star_{PD})\to f_!(\star_{PA})$ in $PB$. According to Lemma \ref{elements}, $f_!(\star_{PA})$ is $f$-universal if and only if $f$ is a $P$-covering. We have thus to show that for a consistent comprehension scheme $P$, if $g_!(\star_{PB})$ is $g$-universal then $f_!(\star_{PA})$ is $f$-universal precisely when $(gf)_!(\star_{PA})$ is $gf$-universal.

Note that there is a morphism $(gf)_!(\star_{PA})\to g_!(\star_{PB})$ obtained by applying $g_!$ to the unique morphism $f_!(\star_{PA})\to\star_{PB}$. Since $P$ is consistent, $g_!$ acts fully faithfully on morphisms with terminal codomain so that the former morphism is unique too. In particular, assuming that $g_!(\star_{PB})$ is $g$-universal, for any $h:D\to C$, a morphism $h_!(\star_{PD})\to g_!(\star_{PB})$ with lifting $\tilde{h}:D\to B$ factors through $(gf)_!(\star_{PA})$ precisely when $\tilde{h}(\star_{PD})$ maps to $f_!(\star_{PA})$, and these two data determine each other.

Therefore, if moreover $f_!(\star_{PA})$ is $f$-universal, then the lifting $\tilde{h}:D\to B$ has itself a unique lifting $\tilde{h}':D\to A$, which implies that $(gf)_!(\star_{PA})$ is $gf$-universal. Conversely, any map $\tilde{h}:D\to B$ may be considered as the lifting of $h=g\tilde{h}:D\to C$ associated with the morphism $g_!(\tilde{h}_!(\star_{PD})\to\star_{PB})$. Henceforth, if $(gf)_!(\star_{PA})$ is $gf$-universal, then the liftings of $\tilde{h}$ to $f$ correspond bijectively to liftings of $h$ to $gf$, i.e. to morphisms $\tilde{h}_!(\star_{PD})\to f_!(\star_{PA})$ so that $f_!(\star_{PA})$ is $f$-universal.\end{proof}

\begin{lma}\label{factorisation}Let $P$ be a comprehension scheme on $\Ee$.
\begin{itemize}\item[(a)]Pullbacks of $P$-coverings exist in $\Ee$ and are again $P$-coverings;\item[(b)]If $P$ is consistent then each morphism of $\Ee$ factors as a $P$-connected morphism followed by a $P$-covering.\end{itemize}\end{lma}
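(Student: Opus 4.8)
The plan is to treat the two parts separately, using Lemma~\ref{elements} for (a) and the consistency hypothesis together with the unit factorisation~(\ref{unit}) for (b).

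For (a), I would first recall that, up to isomorphism over the base, every $P$-covering is of the form $p_C(X):\el_C(X)\to C$ for some $X\in PC$, since the $P$-coverings are exactly the objects of $\Ee/C$ lying in the essential image of the fully faithful functor $p_C$. Given $g:D\to C$, my candidate for the pullback of $p_C(X)$ along $g$ is $p_D(g^*X):\el_D(g^*X)\to D$, which is visibly a $P$-covering. To verify the universal property I would compute cones out of a test object $T$: a morphism $T\to\el_D(g^*X)$ is the same as a morphism $a:T\to D$ equipped with a lifting through $p_D(g^*X)$, which by Lemma~\ref{elements} corresponds to a morphism $\star_{PT}\to a^*(g^*X)$ in $PT$. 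The pseudofunctor isomorphism $a^*g^*\cong(ga)^*$ turns this into a morphism $\star_{PT}\to(ga)^*X$, that is, by Lemma~\ref{elements} again, into a lifting of $ga:T\to C$ through $p_C(X)$, i.e. a morphism $b:T\to\el_C(X)$ with $p_C(X)b=ga$. Thus morphisms $T\to\el_D(g^*X)$ correspond to cones $(a,b)$ over the cospan $D\xrightarrow{g}C\xleftarrow{p_C(X)}\el_C(X)$, as required. The only delicate point is the naturality of this bijection in $T$, which I expect to follow from the pseudonaturality of the comparison isomorphisms $a^*g^*\cong(ga)^*$.

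For (b), given $f:A\to B$ I would take the unit factorisation~(\ref{unit}), writing $X=c_B(f)=f_!(\star_{PA})$ and $p=p_B(X):\el_B(X)\to B$, so that $f=p\circ\eta_f$. The right factor $p$ is a $P$-covering because it lies in the essential image of $p_B$; equivalently, the counit $\epsilon_X:c_Bp_B(X)\to X$ is invertible, so the triangle identity forces the unit $\eta_p$ to be invertible. It remains to show that $\eta_f$ is $P$-connected, i.e. that the canonical morphism $(\eta_f)_!(\star_{PA})\to\star$ in $P\el_B(X)$ is invertible, and this is where consistency is used: by Lemma~\ref{slice} the functor $p_!:P\el_B(X)\to PB/p_!(\star)$ is an equivalence, hence reflects isomorphisms.

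The crux is therefore to prove that $p_!$ sends $(\eta_f)_!(\star_{PA})\to\star$ to an isomorphism. For this I would view $\eta_f$ as a morphism $f\to p$ in $\Ee/B$ and apply the reflector $c_B$. Since $c_B$ is left adjoint to the fully faithful $p_B$, the triangle identity gives $\epsilon_X\circ c_B(\eta_f)=\mathrm{id}_X$ with $\epsilon_X$ invertible, so $c_B(\eta_f)$ is an isomorphism. Unwinding the definition of $c_B$ on morphisms, $c_B(\eta_f)$ is the composite of the pseudofunctoriality isomorphism $f_!(\star_{PA})\cong p_!((\eta_f)_!(\star_{PA}))$ with $p_!$ applied to $(\eta_f)_!(\star_{PA})\to\star$; hence this last morphism becomes an isomorphism after applying $p_!$. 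Since $p_!$ is an equivalence, $(\eta_f)_!(\star_{PA})\to\star$ is itself invertible, proving that $\eta_f$ is $P$-connected. The main obstacle I anticipate is precisely this last translation: passing from the statement ``$c_B(\eta_f)$ is invertible'' in $PB$ to the statement ``$(\eta_f)_!(\star_{PA})\cong\star$'' in $P\el_B(X)$, which relies on both the pseudofunctoriality identification and the fact that consistency upgrades $p_!$ to an equivalence.
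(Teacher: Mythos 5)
For part (a) your argument is essentially the paper's own proof: the paper exhibits the square with vertices $\el_D(h^*(X))$ and $\el_B(X)$ and verifies the universal property by exactly the translation you describe, using Lemma~\ref{elements} twice together with the isomorphism $(hh')^*\cong (h')^*h^*$; the naturality point you flag is the same one the paper handles when it checks that the constructed lift composed with $\bar{h}$ recovers $f'$, and it follows from the naturality (in both variables) of the adjunction bijections underlying Lemma~\ref{elements}.

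For part (b) your skeleton agrees with the paper's (factor $f$ through the unit of $c_B\dashv p_B$, reduce to showing that $p_!$ inverts the unique map $(\eta_f)_!(\star_{PA})\to\star$, then use the slice equivalence of Lemma~\ref{slice} to reflect the isomorphism back), but your middle step is genuinely different, and in fact more robust than what the paper writes. The paper argues that consistency makes $p_!$ ``fully faithful on morphisms with terminal codomain'', so that $p_!\bigl((\eta_f)_!(\star_{PA})\bigr)\to p_!(\star)$ is \emph{the unique} morphism in $PB$ with that domain and codomain, hence must coincide with the composite of the two evident isomorphisms through $f_!(\star_{PA})$. Read literally in $PB$, this uniqueness can fail even in the guiding example $PA=\Set^A$ on $\Cat$: take $B$ terminal and $X$ a two-element set, so that $\el_B(X)$ is the discrete two-object category, $p_!$ is the coproduct functor, and $\mathrm{Hom}_{PB}(p_!(\star),p_!(\star))$ has four elements while $\mathrm{Hom}(\star,\star)$ is a singleton; a charitable reading inside the slice $PB/p_!(\star)$ makes the uniqueness true but then leaves the invertibility unproved. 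Your route supplies exactly the missing argument: the triangle identity for $c_B\dashv p_B$, with $\epsilon_X$ invertible because $p_B$ is fully faithful, shows that $c_B(\eta_f)$ is an isomorphism, and unwinding $c_B$ on morphisms identifies $c_B(\eta_f)$ with $p_!\bigl((\eta_f)_!(\star_{PA})\to\star\bigr)$ up to the pseudofunctoriality isomorphism. This needs only the comprehension-scheme axioms; consistency is then invoked solely where it is indispensable, namely to reflect the isomorphism along the slice equivalence. So your proof is correct, and its one genuine departure from the paper is an improvement in rigour rather than merely an alternative.
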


\begin{proof}(a) Let us consider the following commutative square in $\Ee$
$$\xymatrix{\el_D(h^*(X))\ar[d]_{p_D(h^*(X))}\ar@{.>}[r]^{\bar{h}}&\el_B(X)\ar[d]^{p_B(X)}\\D\ar[r]_h&B}$$where $X$ is an object of $PB$ and $\bar{h}$ is induced by the identity of $h^*(X)$ in $PD$, cf. Lemma \ref{elements}. We claim that the square is a pullback in $\Ee$. Indeed, for any span $(f':D'\to \el_B(X),h':D'\to D)$ such that $p_B(X)f'=hh'$ we have to exhibit a unique map of spans towards $(\bar{h},p_D(h^*(X)))$. According to Lemma \ref{elements}, the existence of $f'$ amounts to a morphism $\star_{PD'}\to (hh')^*(X)$, but the latter amounts to a morphism $\star_{PD'}\to (h')^*(h^*(X))$ which, again according to Lemma \ref{elements}, yields a uniquely determined lift $D'\to\el_D(h^*(X))$ of $h'$. We have to check that this lift composed with $\bar{h}$ yields $f'$, but this just expresses that the latter is the lift of $hh'$ corresponding to $\star_{PD'}\to (hh')^*(X)$.

(b) The unit of the adjunction $c_B:\Ee/B\lrto PB:p_B$ at $f:A\to B$ is part of the following diagram
$$\xymatrix{
 A \ar[r]^{\eta_f}\ar[d]_f &  \el_B(f_!(\star_{PA}))\ar[ld]^{p_B(f_!(\star_{PA}))}\\
 B
}$$where we replaced $c_B(f)$ with its definition $f_!(\star_{PA})$. It suffices thus to show that $\eta_f$ is $P$-connected. Let us denote $\star$ the distinguished terminal object of $P\el_B(f_!(\star_{PA}))$ and write $p$ for $p_B(f_!(\star_{PA}))$. We have to show that the unique map $(\eta_f)_!(\star_{PA})\to\star$ is invertible. By consistency of the comprehension scheme, the left adjoint $p_!$ is fully faithful on morphisms with terminal codomain. Therefore, the image $p_!(\eta_f)_!(\star_{PA})\to p_!(\star)$ is the unique morphism in $PB$ with fixed domain and codomain and must be invertible because both sides are isomorphic to $f_!(\star_{PA})$. It follows that the given map $(\eta_f)_!(\star_{PA})\to\star$ is invertible as well.\end{proof}

\begin{thm}\label{main}Any consistent comprehension scheme $P$ on $\Ee$ induces a complete orthogonal factorisation system on $\Ee$ with left part consisting of $P$-connected morphisms and right part consisting of $P$-coverings.\end{thm}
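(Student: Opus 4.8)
The plan is to verify the four conditions defining a complete orthogonal factorisation system for the pair $(\Ll,\Rr)$, where $\Ll$ is the class of $P$-connected morphisms and $\Rr$ the class of $P$-coverings. Two of them are already in hand: Lemma \ref{factorisation}(b) provides, for each morphism, a factorisation as a $P$-connected map followed by a $P$-covering, and Lemma \ref{factorisation}(a) gives completeness, i.e. the existence of pullbacks of $P$-coverings along arbitrary morphisms. The closure properties are immediate. $P$-coverings compose by Proposition \ref{consistent}, and every isomorphism is a $P$-covering: since the elements-object of the terminal object is the terminal object $\mathrm{id}_B$ of $\Ee/B$, one has $\el_B(\star_{PB})\cong B$ over $B$, so for invertible $f$ the factor $p_B(c_B(f))$ is invertible and hence so is $\eta_f$. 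Dually, $(gf)_!\cong g_!f_!$ shows that $P$-connected morphisms compose and contain all isomorphisms. Thus the real content is orthogonality.

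I would then show that every $P$-connected $l:A\to B$ is orthogonal to every $P$-covering $r:C\to D$. As orthogonality is invariant under isomorphism in the arrow category and every $P$-covering is isomorphic over its codomain to some $p_D(X):\el_D(X)\to D$, I may take $r=p_D(X)$ for an object $X$ of $PD$. Given a commutative square with top $u:A\to C$, left $l$, right $r$ and bottom $v:B\to D$, Lemma \ref{elements} identifies diagonal fillers $d$ with $rd=v$ with morphisms $\beta:\star_{PB}\to v^*(X)$ in $PB$, and identifies $u$ (a lift of $vl$ along $r$) with a morphism $\alpha:\star_{PA}\to l^*v^*(X)$ in $PA$. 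Naturality of this correspondence in the base — compatibility of the adjunctions $c_D\dashv p_D$ and $h_!\dashv h^*$ under precomposition by $l$ — identifies the passage $d\mapsto dl$ with restriction $\beta\mapsto l^*(\beta)$, so a filler with $dl=u$ is exactly a $\beta$ with $l^*(\beta)=\alpha$. The crucial point is that this restriction map is a bijection because $l$ is $P$-connected: composing $l_!(\star_{PA})\cong\star_{PB}$ with the adjunction $l_!\dashv l^*$ gives
\[
\mathrm{Hom}_{PB}(\star_{PB},v^*X)\cong\mathrm{Hom}_{PB}(l_!\star_{PA},v^*X)\cong\mathrm{Hom}_{PA}(\star_{PA},l^*v^*X),
\]
and since the unit $\star_{PA}\to l^*l_!(\star_{PA})=\star_{PA}$ is the identity of the terminal object, this composite is exactly $l^*(-)$. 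Hence there is a unique $\beta$ with $l^*(\beta)=\alpha$, equivalently a unique filler $d$, which proves $l\perp r$.

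Finally I would assemble these pieces through the standard criterion: a pair of classes closed under composition and containing the isomorphisms, admitting $(\Ll,\Rr)$-factorisations and satisfying $\Ll\perp\Rr$, is automatically an orthogonal factorisation system, the equalities $\Ll={}^\perp\Rr$ and $\Rr=\Ll^\perp$ following from the usual retract argument (a morphism in $\Ll^\perp$ factors as $rl$, and orthogonality forces its $\Ll$-part $l$ to be invertible, so the morphism lies in $\Rr$; dually for $\Ll$). I expect the orthogonality step to be the main obstacle, and within it the bookkeeping identifying precomposition of fillers by $l$ with restriction $l^*$ via Lemma \ref{elements}; once that naturality is recorded, $P$-connectedness yields the unique lift at once.
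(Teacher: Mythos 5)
Your proof is correct, and its skeleton is the same as the paper's: factorisation from Lemma \ref{factorisation}(b), closure of both classes (Proposition \ref{consistent} for coverings, pseudo-functoriality for connected maps and isomorphisms), and orthogonality deduced from Lemma \ref{elements} together with the isomorphism $l_!(\star_{PA})\cong\star_{PB}$. The one place where you genuinely diverge is the routing of the orthogonality step. The paper first pulls back $r$ along the bottom map (Lemma \ref{factorisation}(a)), so that diagonal fillers become sections of the covering $r':B\times_D C\to B$ and all bookkeeping happens inside the single category $PB$: sections correspond to morphisms $\star_{PB}\to r'_!(\star)$, the comparison map $A\to B\times_D C$ to a morphism $l_!(\star_{PA})\to r'_!(\star)$, and connectedness finishes. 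You instead work directly on the given square, normalising $r=p_D(X)$ and transporting the lifting data through the adjunction $l_!\dashv l^*$; this makes orthogonality independent of the existence of pullbacks (which are then needed only for completeness of the system), at the price of having to verify that the bijection of Lemma \ref{elements} is natural in the base morphism --- a naturality which the lemma as stated only records in the $PB$-variable. You flag this check correctly, and it is routine from the pseudo-functoriality of $P$; note that the paper's route also uses such a compatibility implicitly when it matches the condition ``$i'\circ l$ equals the comparison map'' with the corresponding condition in $PB$. So the two arguments hinge on exactly the same crux and are interchangeable: yours exposes that orthogonality needs no completeness hypothesis, while the paper's keeps the entire computation over one base and lets Lemma \ref{factorisation}(a) do double duty.
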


\begin{proof}It follows from Lemma \ref{factorisation}b that each morphism factors as a $P$-connected morphism followed by a $P$-covering. Since $P$ is consistent, $P$-coverings compose by Proposition \ref{consistent} as do $P$-connected morphisms by their very definition. Both classes contain all isomorphisms so that it remains to be shown that any commuting square$$\xymatrix{A\ar[r]\ar[d]_l&C\ar[d]^r\\B\ar@{.>}[ru]\ar[r]&D}$$ with $P$-connected $l$ and $P$-covering $r$ admits a unique diagonal filler. By Lemma \ref{factorisation}a, the pullback $r':B\times_D C\to B$ exists in $\Ee$ and is a $P$-covering so that the factorisation system is complete. Diagonal fillers $B\to C$ correspond bijectively to sections $i':B\to B\times_D C$ of $r'$ such that $i'\circ l$ coincides with the comparison map $A\to B\times_DC$. By Lemma \ref{elements}, sections of $r'$ correspond bijectively to morphisms $\star_{PB}\to(r')_!(\star_{P(B\times_DC)})$ in $PB$. The comparison map $A\to B\times_CD$ corresponds to a uniquely determined morphism $l_!(\star_{PA})\to(r')_!(\star_{P(B\times_CD)})$ in $PB$. Since $l$ is $P$-connected we have an isomorphism $l_!(\star_{PA})\cong\star_{PB}$ yielding the unique section of $r'$ as required for the orthogonality of the factorisation system.\end{proof}

\begin{thm}\label{mainbis}Any complete factorisation system $(\Ll,\Rr)$ on $\Ee$ defines a consistent comprehension scheme $P_{(\Ll,\Rr)}$ on $\Ee$ assigning to an object $B$ the full subcategory $(\Ee/B)_\Rr$ of $\Ee/B$ spanned by the $\Rr$-morphisms with codomain $B$.

All consistent comprehension schemes inducing the factorisation system $(\Ll,\Rr)$ via Theorem \ref{main} are equivalent to $P_{(\Ll,\Rr)}$.\end{thm}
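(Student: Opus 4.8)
The plan is to handle the existence of $P_{(\Ll,\Rr)}$ and its uniqueness separately. For the construction, I set $P_{(\Ll,\Rr)}B=(\Ee/B)_\Rr$ with distinguished terminal object $\mathrm{id}_B$; this is an isomorphism, hence an $\Rr$-morphism, and it is already terminal in $\Ee/B$. For $f:A\to B$ I define $f^*:(\Ee/B)_\Rr\to(\Ee/A)_\Rr$ as pullback along $f$ (legitimate since the system is complete and the right class of an orthogonal factorisation system is stable under pullback) and $f_!:(\Ee/A)_\Rr\to(\Ee/B)_\Rr$ by sending $s:D\to A$ to the $\Rr$-part of the factorisation of $f\circ s$. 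The key observation is that the adjunction $f_!\dashv f^*$ is precisely orthogonality: a morphism $f_!(s)\to r$ in $(\Ee/B)_\Rr$ corresponds, via the unique diagonal filler for the square having the $\Ll$-part of $f\circ s$ on the left and $r$ on the right, to a morphism into the pullback $f^*(r)$ over $A$, and conversely. Pseudo-functoriality of $f\mapsto(f_!,f^*)$ follows from the uniqueness up to isomorphism of factorisations.

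With these definitions Lawvere's comprehension functor $c_B$ is the reflection $\Ee/B\to(\Ee/B)_\Rr$ sending $f$ to the $\Rr$-part of its factorisation, so that $c_B(f)=f_!(\star)$ as demanded, while $p_B$ is the fully faithful inclusion. Consequently the $P$-coverings are exactly the $\Rr$-morphisms and the $P$-connected morphisms exactly the $\Ll$-morphisms. Since $\Rr$ is closed under composition and left cancellable, Proposition \ref{consistent} gives consistency, and Theorem \ref{main} then recovers the original system $(\Ll,\Rr)$.

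For uniqueness, let $Q$ be any consistent comprehension scheme inducing $(\Ll,\Rr)$ via Theorem \ref{main}. Its $Q$-coverings then coincide with $\Rr$, so the general equivalence $\Cov_B\simeq QB$ reads $QB\simeq(\Ee/B)_\Rr=P_{(\Ll,\Rr)}B$, realised by the fully faithful right adjoint $p_B^Q$ whose essential image is $\Cov_B$. I would then check that this equivalence respects the full structure: terminal objects match because $\mathrm{id}_B$ corresponds to $\star_{QB}$; the functors $f^*$ match because, by the pullback description of objects of elements obtained in the proof of Lemma \ref{factorisation}(a), $f^*$ reads as pullback along $f$ on coverings; and the functors $f_!$ match because $f_!(s_!(\star))\cong(f\circ s)_!(\star)=c_B(f\circ s)$ is the $\Rr$-part of $f\circ s$ by pseudo-functoriality. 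This identifies $Q$ with $P_{(\Ll,\Rr)}$ as comprehension schemes.

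I expect the main obstacle to lie in the uniqueness half: one must fix the precise meaning of an equivalence of comprehension schemes and verify that the object-level equivalences $QB\simeq(\Ee/B)_\Rr$ assemble into a genuinely pseudo-natural equivalence compatible with every adjunction $f_!\dashv f^*$, rather than merely matching the underlying categories one object at a time. By contrast the construction is essentially forced once the adjunction $f_!\dashv f^*$ is recognised as orthogonality, and consistency is immediate from Proposition \ref{consistent}.
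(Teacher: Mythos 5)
Your proposal is correct and follows essentially the same route as the paper: the same construction of $f_!$ (as the $\Rr$-part of the factorisation) and $f^*$ (as pullback), the same identification of the adjunction with orthogonality, consistency via Proposition \ref{consistent}, and uniqueness via $QB\simeq\Cov_B=(\Ee/B)_\Rr$. If anything, you are more explicit than the paper about what the uniqueness claim requires (pseudo-naturality and compatibility of the equivalences with $f_!\dashv f^*$), which the paper compresses into the single line $P_{(\Ll,\Rr)}B=(\Ee/B)_\Rr=\Cov_B\simeq PB$.
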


\begin{proof}For any morphism $f:A\to B$, the adjunction $f_!:(\Ee/A)_\Rr\lrto(\Ee/B)_\Rr:f^*$ is defined as follows: for $(a:A'\to A)\in(\Ee/A)_\Rr$ we define $f_!(a)=r_{f\circ a}\in(\Ee/B)_\Rr$, and for $(b:B'\to B)\in(\Ee/B)_\Rr$ we define $f^*(a)$ to be a pullback of $a$ along $f$, which in virtue of completeness exists and belongs to $(\Ee/A)_\Rr$. Both assigments are functorial in virtue of the orthogonality of the factorisation system. For adjointness, observe that as well morphisms $a\to f^*(b)$ in $(\Ee/A)_\Rr$ as well morphisms $f_!(a)\to b$ in $(\Ee/B)_\Rr$ correspond bijectively to commuting squares$$\xymatrix{A'\ar[r]\ar[d]_a&B'\ar[d]^b\\A\ar[r]_f&B}$$ in $\Ee$. To establish these bijective correspondences it is essential that $\Rr$-morphisms are left cancellable. This is a general property of orthogonal factorisation systems.

The category $(\Ee/B)_\Rr$ has the identity of $B$ as distinguished terminal object. The comprehension functor $c_B:\Ee/B\to(\Ee/\Bb)_\Rr$ is given by $f\mapsto r_f$, with adjoint $p_B:(\Ee/B)_\Rr\to\Ee/B$ the canonical embedding. In fact, the $(c_B,p_B)$-adjunction identifies $(\Ee/B)_\Rr$ with a full \emph{reflective} subcategory of $\Ee/B$. The comprehension scheme $P_{(\Ll,\Rr)}B=(\Ee/B)_\Rr$ is consistent in virtue of Proposition \ref{consistent} because $P_{(\Ll,\Rr)}$-coverings and $\Rr$-morphisms coincide and $\Rr$-morphisms are left cancellable.

Finally, any consistent comprehension scheme $P:\Ee\to\Adj_*$ inducing the factorisation system $(\Ll,\Rr)$ satisfies $P_{(\Ll,\Rr)}B=(\Ee/B)_\Rr=\Cov_B\simeq PB$.\end{proof}

\begin{rmk}We shall call the factorisation system ($P$-connected, $P$-covering) the \emph{comprehensive factorisation} defined by $P$. In the special case of small categories, the comprehension scheme $PA=\Set^A$ yields the factorisation of a functor into an initial functor followed by a discrete opfibration because $P$-connected functors are precisely initial functors, cf. \cite[Propositon 2]{StWa}. This is the factorisation originally introduced by Street and Walters as the comprehensive factorisation of a functor, cf. \cite[Theorem 3]{StWa}. The ``dual'' comprehension scheme $P'A=\Set^{A^\op}$ yields the factorisation of a functor into a final functor followed by a discrete fibration.

The powerset comprehension scheme on sets yields the image-factorisation of a set mapping. This example extends in a natural way to any well-powered regular category $\Ee$, using as powerset the set of subobjects ordered by inclusion. The comprehension scheme amounts here to the choice of a representing monomorphism $A\ito B$ for each subobject of $B$, and so affords some form of axiom of choice.

Our correspondence shows that \emph{all} complete orthogonal factorisation systems $(\Ll,\Rr)$ on $\Ee$ are ``comprehensive'' with respect to the scheme $P_{(\Ll,\Rr)}B=(\Ee/B)_\Rr$. Nevertheless, the freedom to choose equivalent comprehension schemes inducing the same factorisation system is valuable in practice. Moreover, the correspondence allows us to \emph{classify} factorisation systems according to properties of the corresponding comprehension scheme. For instance, the value of the comprehension scheme $P:\Ee\to\Adj_*$ at a terminal object $\star_\Ee$ of $\Ee$ contains a lot of information. We are primarily interested in those cases where $P(\star_\Ee)=\Set$ which actually fits best with our terminology ($P$-connected, $P$-covering).\end{rmk} 

The following proposition is remarkable insofar as it relates \emph{Frobenius reciprocity} for a comprehension scheme, as formulated by Lawvere \cite{Law1}, to a natural and often easy-to-check condition on the associated comprehensive factorisation system (which for precisely this relationship is sometimes called \emph{Frobenius property}). These two conditions are both equivalent to a third one, also frequently encountered in practice, and often called the \emph{Beck-Chevalley condition}.

\begin{prp}\label{Frobenius}For any consistent comprehension scheme $P$ on $\Ee$, the following three conditions are equivalent:
\begin{itemize}\item[(a)]For each $f:A\to B$ the adjunction $f_!:PA\lrto PB:f^*$ satisfies \emph{Frobenius reciprocity}, i.e. for any $X$ in $PA$ and $Y$ in $PB$, the canonical map$$f_!(X\times f^*(Y))\to f_!(X)\times Y$$is invertible.\item[(b)]\emph{(Beck-Chevalley)} For any pullback square in $\Ee$ with $P$-coverings $p$ and $q$ $$\xymatrix{A'\ar[r]^g\ar[d]_q&B'\ar[d]^p\\A\ar[r]_f&B}$$the induced natural transformation $g_!q^*\to p^*f_!$ is invertible. \item[(c)]$P$-connected morphisms are stable under pullback along $P$-coverings.\end{itemize}\end{prp}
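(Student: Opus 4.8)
The plan is to prove the three equivalences by taking condition (c) as the hub, establishing (c) $\Leftrightarrow$ (b) and (c) $\Leftrightarrow$ (a). The single geometric input behind three of the four implications is the observation that, for a consistent comprehension scheme, the comprehensive factorisation of Lemma \ref{factorisation}b is compatible with pullback along a $P$-covering. Concretely, given a morphism $h:A''\to B$ with comprehensive factorisation $A''\xrightarrow{m}C\xrightarrow{c}B$ ($m$ being $P$-connected, $c$ a $P$-covering) and a $P$-covering $p:B'\to B$, I would pull the entire factorisation back along $p$ to obtain $A''\times_B B'\xrightarrow{\bar m}C\times_B B'\xrightarrow{\bar c}B'$. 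Here $\bar c$ is a $P$-covering as a pullback of $c$ (Lemma \ref{factorisation}a), and $\bar m$ is the pullback of the $P$-connected map $m$ along the $P$-covering $C\times_B B'\to C$. Thus \emph{precisely condition} (c) guarantees that $\bar m$ is again $P$-connected, so that the pulled-back diagram is once more a comprehensive factorisation.

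For (c) $\Rightarrow$ (b) I would evaluate the Beck-Chevalley square (with $f:A\to B$, $g:A'\to B'$ horizontal and $q,p$ the vertical $P$-coverings) on a $P$-covering $a:A''\to A$ representing $X\in PA$, using $\Cov_A\simeq PA$. Identifying $f^*$, $p^*$ with pullback of coverings and $f_!$, $g_!$ with ``compose and take the $P$-covering part'', both $g_!q^*(X)$ and $p^*f_!(X)$ are computed from the factorisation of $f\circ a$; the pullback-compatibility above identifies them through uniqueness of the comprehensive factorisation, and one checks that this identification realises the canonical mate $g_!q^*\to p^*f_!$. For (c) $\Rightarrow$ (a) the same mechanism applies: writing $Y$ via its covering $p:\el_B(Y)\to B$ and $X$ via $a:A''\to A$, the product $X\times f^*(Y)$ corresponds to the covering $A''\times_B\el_B(Y)\to A$, and applying $f_!$ amounts to factoring $f\circ a$ and pulling back along $p$, so pullback-compatibility yields $f_!(X\times f^*(Y))\cong f_!(X)\times Y$ and identifies the canonical comparison map.

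The reverse implications both proceed through the terminal object. For (b) $\Rightarrow$ (c), given $P$-connected $l:A\to B$ and a $P$-covering $p:B'\to B$ with pullback $l':A'\to B'$ along the $P$-covering $q:A'\to A$, I apply Beck-Chevalley to the square with vertical coverings $q,p$ and horizontal maps $l,l'$, evaluated at $\star_{PA}$. Since $q^*$ and $p^*$ are right adjoints they preserve terminal objects, whence $l'_!(\star_{PA'})\cong l'_!q^*(\star_{PA})\cong p^*l_!(\star_{PA})\cong p^*(\star_{PB})\cong\star_{PB'}$, using $P$-connectedness of $l$; so $l'$ is $P$-connected. For (a) $\Rightarrow$ (c), I apply Frobenius reciprocity to the same $P$-connected $l$ with $X=\star_{PA}$ and $Y=c_B(p)$. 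Then $X\times l^*(Y)\cong l^*(Y)$ and $l_!(\star_{PA})\cong\star_{PB}$, so the Frobenius map reduces to the counit $l_!l^*(Y)\to Y$. Writing the comprehensive factorisation $A'\xrightarrow{m'}C'\xrightarrow{c'}B'$ of $l'$, in the covering picture this counit is exactly $c':C'\to B'$ (since $l\circ q=p\circ l'=(p c')\circ m'$ already exhibits the factorisation of $l\circ q$); hence Frobenius invertibility forces $c'$ to be an isomorphism, i.e. $l'\cong m'$ to be $P$-connected.

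I expect the main obstacle to be bookkeeping rather than conceptual: verifying that the isomorphisms produced by uniqueness of the comprehensive factorisation genuinely coincide with the \emph{canonical} comparison maps named in (a) and (b) --- the Beck-Chevalley mate $g_!q^*\to p^*f_!$ and the Frobenius map $f_!(X\times f^*(Y))\to f_!(X)\times Y$ --- and that the identifications of $f_!$, $f^*$ and of products in $PB$ with the corresponding operations on $\Cov_B$ (via $\Cov_B\simeq PB$ and Lemma \ref{elements}) are natural. The most delicate single point is the identification, in (a) $\Rightarrow$ (c), of the Frobenius map at $(X,Y)=(\star_{PA},c_B(p))$ with the adjunction counit and of that counit with the $P$-covering part of $l'$; this requires tracing the reflection $c_B\dashv p_B$ together with the pseudo-functoriality isomorphism $p_!l'_!\cong(pl')_!=(lq)_!\cong l_!q_!$.
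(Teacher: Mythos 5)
Your proposal is correct, but its architecture differs genuinely from the paper's. You take (c) as the hub: your key lemma --- that (c) says exactly that pulling a comprehensive factorisation back along a $P$-covering yields again a comprehensive factorisation --- gives (c)$\Rightarrow$(a) and (c)$\Rightarrow$(b) uniformly in the covering picture, and you close the cycle by evaluating the Beck-Chevalley mate at $\star_{PA}$ and the Frobenius map at $(\star_{PA},c_B(p))$. The paper never routes (a) and (b) through (c); it proves (a)$\Leftrightarrow$(b) outright: since every $P$-covering is an element-covering up to isomorphism, the square in (b) may be assumed to have $A'=\el_A(f^*(Y))$ and $B'=\el_B(Y)$, and consistency (Lemma \ref{slice}) identifies $P\el_A(f^*(Y))\simeq PA/f^*(Y)$ and $P\el_B(Y)\simeq PB/Y$; under these identifications the mate $g_!q^*\to p^*f_!$ \emph{is} the Frobenius map viewed over $Y$, so (a) and (b) become one condition in two languages and the mate-tracking you flag as the delicate point simply never arises. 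For (c)$\Rightarrow$(a),(b) the paper then factors an arbitrary $f$ as $P$-connected followed by $P$-covering (Lemma \ref{factorisation}b): for $P$-connected $f$, condition (c) forces invertibility of the counit $\epsilon^Y_f$ (the same counit identification that drives your (a)$\Rightarrow$(c), used in the reverse direction), making (b) automatic; for a $P$-covering $f$, condition (a) is the slice isomorphism $X\times_Z(Y\times Z)\cong X\times Y$; the composite case rests on the (implicit) fact that Frobenius isomorphisms compose under pseudo-functoriality. Its (b)$\Rightarrow$(c) is the same easy diagram chase as yours. What your route buys is a self-contained, geometrically transparent argument plus a reusable stability statement for comprehensive factorisations, at the cost of the canonical-map bookkeeping --- which does go through, since in each instance the canonical map and your uniqueness isomorphism both fill the same ($P$-connected, $P$-covering) orthogonality square and hence coincide. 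What the paper's route buys is the tautological character of (a)$\Leftrightarrow$(b), and a use of (c) confined to the two extreme classes of morphisms where the verification is immediate.
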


\begin{proof}Let us first notice that in (b) we can assume $A'=\el_A(f^*(Y))$ and $B'=\el_B(Y)$, cf. the proof of Lemma \ref{factorisation}a. By consistency of $P$, we can furthermore replace $P\el_A(f^*(Y))$ with $PA/f^*(Y)$ and $P\el_B(Y)$ with $PB/Y$, cf. Lemma \ref{slice}, so that we get the following commutative square of categories$$\xymatrix{PA/f^*(Y)\ar[r]^{(\epsilon^Y_f)_!f_!}\ar[d]&PB/Y\ar[d]\\PA\ar[r]_{f_!}&PB}$$in which the vertical functors are the canonical projections. Now (b) is equivalent to the condition that for each $X$ in $PA$ the morphism $f_!(X\times f^*(Y)\to f^*(Y))$ composed with the counit $\epsilon^Y_f:f_!f^*(Y)\to Y$ is isomorphic (over $Y$) to the morphism $f_!(X)\times Y\to Y$ which is precisely condition (a). Condition (b) implies (c) by an easy diagram chase. Finally, if condition (c) holds, then the assignment just described for a $P$-connected morphism $f:A\to B$ and object $Y$ in $PB$ must take the identity of $f^*(Y)$ to a morphism isomorphic to the identity of $Y$. This means that $\epsilon^Y_f$ is invertible, i.e. $f^*$ is fully faithful for $P$-connected $f$. For such an $f$, condition (b) is automatically verified. By consistency of $P$, every morphism $f:A\to B$ factors as a $P$-connected morphism followed by a $P$-covering, cf. Lemma \ref{factorisation}b. It remains thus to show (a) or (b) for $P$-coverings $f$. For a $P$-covering $f$, condition (a) amounts to the familiar isomorphism $X\times_Z(Y\times Z)\cong X\times Y$.\end{proof}

\begin{rmk}The two comprehension schemes $P,P'$ on Cat satisfy the three conditions of Proposition \ref{Frobenius}. Condition (b) says that the square is \emph{exact} in the sense of Guitart \cite{Gu}. Note that condition (c) yields thus two stability properties which are dual to each other. It is remarkable that Guitart's characterisation of exact squares shows that (b) is an exact square precisely when for each $Y$ in $PB$ the induced $P$-covering $g/Y\to f/p(Y)$ is $P'$-connected. This can be used to give an alternative proof of (c): If $f$ is $P$-connected then $f/p(Y)$ is connected so that $g/Y$ is connected as well, which implies that $g$ is $P$-connected.\end{rmk}

\begin{sct}\textbf{Topological spaces.}\label{topological}We define a comprehension scheme $P_{top}$ for the full subcategory $\Top_{lsc}$ of the category of topological spaces spanned by the locally path-connected, semi-locally simply connected spaces: $P_{top}A$ is the category $\Sh_{loc}(A)$ of \emph{locally constant set-valued sheaves} on $A$. There is an equivalence of categories $\Sh_{loc}(A)\simeq\Cov(A)$ between locally constant sheaves on $A$ and topological coverings of $A$ (induced by the ``espace \'etal\'e'' construction $Et_A:\Sh(A)\to\Top/A$). The sheaf-theoretical restriction functor $f^*:P_{top}B\to P_{top}A$ corresponds to pulling back the corresponding covering, and the right adjoint $p_A=Et_A:\Sh_{loc}(A)\to\Top_{lsc}/A$ satisfies the universal property of Lemma \ref{elements}. The existence of the comprehension scheme $P_{top}$ hinges thus on the existence of a left adjoint $f_!:P_{top}A\to P_{top}B$.

For general set-valued sheaves such a left adjoint does not exist, but for locally constant sheaves over objects in $\Top_{lcs}$ it does. One uses that in this case the \emph{monodromy action} $\Sh_{loc}(A)\to\Set^{\Pi_1(A)}$ is an equivalence of categories so that $f_!:P_{top}A\to P_{top}B$ is induced by left Kan extension along the induced functor $\Pi_1(f):\Pi_1(A)\to\Pi_1(B)$ on fundamental groupoids. The quasi-inverse to the monodromy action assigns to a \emph{local system} $X:\Pi_1(A)\to\Set$ the presheaf whose sections over an open subset $U$ of $A$ consist of all families $(x_a\in X(a))_{a\in U}$ such that, for $(a,b)\in U\times U$ and $(\gamma:a\to b)\in\Pi_1(U)$, equality $X(\gamma)(x_a)=x_b$ holds. This presheaf is a locally constant sheaf on $A$ precisely because $A$ belongs to $\Top_{lsc}$.

The comprehension scheme $P_{top}$ is consistent because $P_{top}$-coverings coincide with topological coverings, and the latter compose and are left cancellable in $\Top_{lsc}$. In Section \ref{simplicial} we show that the $P_{top}$-connected morphisms are precisely those continuous maps which have connected homotopy fibres, i.e. which induce a bijection on path-components and a surjection on fundamental groups. The resulting comprehensive factorisation of a continuous map induces a formal construction of the universal covering space for any based space in $\Top_{lsc}$, cf. Section \ref{galois}.\end{sct}

\begin{dfn}\label{restriction}A comprehension scheme $P$ on $\Ee$ is said to \emph{restrict} to a full and replete subcategory $\Ee'$ of $\Ee$ if the restriction $P_{|\Ee'}$ is a comprehension scheme on $\Ee'$.\end{dfn}

According to Lemma \ref{elements}, a comprehension scheme $P$ restricts to $\Ee'$ precisely when for each object $A$ of $\Ee'$ and each object $X$ of $PA$, the object of elements $\el_A(X)$ belongs to $\Ee'$ or, equivalently, precisely when every $P$-covering of $\Ee$ with codomain in $\Ee'$ belongs to $\Ee'$. If $P$ is consistent then so is any of its restrictions.

\begin{sct}\label{groupoid}\textbf{Groupoids.} The full subcategory $\Gpd$ of $\Cat$ spanned by the groupoids permits a restriction of the comprehension scheme $P:\Cat\to\Adj_*$. The resulting $P$-coverings are the usual groupoid coverings, cf. Gabriel-Zisman \cite[Appendix I]{GZ}. Note that the two comprehension schemes $P,P'$ for $\Cat$ induce equivalent comprehension schemes for $\Gpd$. In particular, a functor between groupoids is initial (resp. a discrete opfibration) if and only if it is final (resp. a discrete fibration).

Bourn \cite{Bou} constructs the comprehensive factorisation for groupoids by a different method, available not only for groupoids in sets but more generally for the category $\Gpd(\Ee)$ of groupoids internal to any exact category $\Ee$. He considers $\Gpd(\Ee)$ as a full reflective subcategory of $\Ee^{\Delta^\op}$ (cf. Section \ref{simplicial}) and constructs (by means of the shift functor) for each groupoid $B$ in $\Ee$ a simplicial path-fibration $\Dec_.(B)\to B$. The comprehensive factorisation of $f:A\to B$ is then constructed by applying a ``fibrewise'' path-component functor to $A\times_{\Dec_.(A)}\Dec_.(B)\to\Dec_.(B)$.\end{sct}

\begin{dfn}\label{reflectivity}An \emph{adjunction} $i:\Dd\rlto\Ee:r$ is called \emph{$P$-reflecting} for a comprehension scheme $P$ on $\Dd$ if the left adjoint $r:\Ee\to\Dd$ induces slice functors $\Ee/B\to\Dd/r(B)$ with fully faithful right adjoint restrictions $\Cov_{r(B)}\to\Ee/B$.

If $i:\Dd\inc\Ee$ is a full embedding, $\Dd$ is called a \emph{$P$-reflective} subcategory of $\,\Ee$.\end{dfn}

A full reflective subcategory $\Dd$ of $\,\Ee$ is $P$-reflective \emph{if and only if} pullbacks of $P$-coverings exist in $\Ee$ along the components $\eta_B:B\to r(B)$ of the unit of the adjunction, and these pullbacks are preserved under the reflection. This means that for any $P$-covering $f':A'\to r(B)$ in $\Dd$ the following pullback $$\xymatrix{A\ar[r]\ar[d]& A'\ar[d]^{f'}\\B\ar[r]_{\eta_B}&r(B)}$$\emph{exists in $\Ee$} and has the property that the upper horizontal map is isomorphic to the unit-component $\eta_{A}:A\to r(A)$. Such a condition (for a specific choice of $P$-coverings) occurs at several places in literature. It is the key property of the \emph{reflective factorisation systems} of Cassidy-H\'ebert-Kelly \cite{CHK}.

\begin{prp}\label{extension}Let $P$ be a (consistent) comprehension scheme on $\Dd$.

If $\Dd$ is a full $P$-reflective subcategory of $\Ee$ then $P$ extends to a (consistent) comprehension scheme $P_\Ee$ on $\Ee$ putting $P_\Ee B=P(r(B))$. The $P_\Ee$-coverings are precisely those morphisms $f:A\to B$ constructible by a pullback square$$\xymatrix{A\ar[r]\ar[d]_f& A'\ar[d]^{f'}\\B\ar[r]_{\eta_B}&r(B)}$$in which $f': A'\to r(B)$ is a $P$-covering.

In the consistent case, the $P_\Ee$-connected morphisms $f$ are precisely those whose reflection $r(f)$ is $P$-connected.\end{prp}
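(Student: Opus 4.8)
The plan is to realise $P_\Ee$ as the composite pseudo-functor $P\circ r:\Ee\to\Adj_*$ and to obtain its comprehension structure by composing two adjunctions. Setting $P_\Ee B=P(r(B))$, $f_!=r(f)_!$ and $f^*=r(f)^*$ for $f:A\to B$, pseudo-functoriality and the distinguished terminal objects $\star_{P_\Ee B}=\star_{P(r(B))}$ are inherited from $P$ and the functoriality of $r$. The comprehension functor then factors as
\[c_B^\Ee=c_{r(B)}\circ r_B:\Ee/B\xrightarrow{\,r_B\,}\Dd/r(B)\xrightarrow{\,c_{r(B)}\,}P(r(B)),\]
where $r_B$ is the slice functor $(f:A\to B)\mapsto(r(f):r(A)\to r(B))$, since $c_B^\Ee(f)=r(f)_!(\star_{P(r(A))})=c_{r(B)}(r_B(f))$.

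First I would identify the right adjoint of $r_B$. A direct computation using the reflection $r\dashv i$ shows that, where it exists, the pullback of $g:C\to r(B)$ along $\eta_B:B\to r(B)$ is right adjoint to $r_B$. By the $P$-reflective hypothesis these pullbacks exist for $P$-coverings $g$ and the resulting functor $s_B:\Cov_{r(B)}\to\Ee/B$ is fully faithful. Since the right adjoint $p_{r(B)}:P(r(B))\to\Dd/r(B)$ of $c_{r(B)}$ always lands in $\Cov_{r(B)}$ (its values $\el_{r(B)}(X)\to r(B)$ are $P$-coverings, cf.\ the proof of Lemma \ref{slice}), I can form $p_B^\Ee=s_B\circ p_{r(B)}$. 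Composing the two adjunctions gives, naturally in $f\in\Ee/B$ and $X\in P(r(B))$,
\[\Ee/B(f,p_B^\Ee(X))\cong\Dd/r(B)(r_B(f),p_{r(B)}(X))\cong P(r(B))(c_B^\Ee(f),X),\]
so $(c_B^\Ee,p_B^\Ee)$ is an adjunction, and $p_B^\Ee$ is fully faithful, being a composite of the fully faithful $p_{r(B)}$ (viewed as landing in $\Cov_{r(B)}$) and the fully faithful $s_B$. This proves $P_\Ee$ is a comprehension scheme. The $P_\Ee$-coverings, being the morphisms with invertible unit, are exactly the essential image of $p_B^\Ee$, that is the $s_B(p_{r(B)}(X))$; as $X$ ranges over $P(r(B))$ these are precisely the pullbacks along $\eta_B$ of the $P$-coverings $p_{r(B)}(X):A'\to r(B)$, which is the asserted description.

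For consistency I would invoke Proposition \ref{consistent} and check that $P_\Ee$-coverings compose and are left cancellable. The key point, supplied by the second half of the $P$-reflective hypothesis, is that $f:A\to B$ is a $P_\Ee$-covering if and only if the naturality square of $\eta$ at $f$ (with corners $A,r(A),B,r(B)$) is a pullback and $r(f)$ is a $P$-covering. Stacking the naturality squares of $\eta$ at $f:A\to B$ and $g:B\to C$ and applying the pullback pasting lemma, together with the facts that $P$-coverings compose and are left cancellable (consistency of $P$, Proposition \ref{consistent}) and that $r(gf)=r(g)r(f)$, yields that $P_\Ee$-coverings compose and are left cancellable. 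Finally the characterisation of $P_\Ee$-connected morphisms is immediate from the construction: $f$ is $P_\Ee$-connected iff $f_!(\star_{P_\Ee A})=r(f)_!(\star_{P(r(A))})\cong\star_{P(r(B))}$, i.e.\ iff $r(f)$ is $P$-connected. The main obstacle is the bookkeeping of the middle paragraph: the right adjoint $s_B$ is only defined on $P$-coverings, so one must confirm that $p_{r(B)}$ indeed takes values there before composing the adjunctions, and the consistency argument relies essentially on the reflection preserving the defining pullbacks, which is exactly what identifies the pulled-back $P$-covering with $r(f)$.
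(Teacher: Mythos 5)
Your proposal is correct, and its core step --- composing the partial adjunction $r_B\dashv s_B$ supplied by $P$-reflectivity with the adjunction $c_{r(B)}\dashv p_{r(B)}$, after checking that $p_{r(B)}$ takes values in $\Cov_{r(B)}$ --- is exactly the paper's construction, which the paper states in two sentences and for which you supply the bookkeeping. The differences lie in the other two claims. For consistency the paper only says that $P_\Ee$-coverings compose and are left cancellable ``by $P$-reflectivity''; your argument via the characterisation of $P_\Ee$-coverings as those $f$ whose naturality square is cartesian with $r(f)$ a $P$-covering, combined with the pasting lemma for pullbacks, is the honest version of that assertion, and in effect proves Remark \ref{fibredreflection} of the paper along the way. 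For the connected morphisms the routes genuinely diverge: the paper works inside the factorisation system of Theorem \ref{main}, characterising $P_\Ee$-connected maps as those left orthogonal to all $P_\Ee$-coverings and transposing orthogonality across the reflection --- an argument that requires consistency but is invariant under replacing $P_\Ee$ by any equivalent comprehension scheme. You instead read the claim off from the definition: with $f_!:=r(f)_!$ the condition $f_!(\star_{P_\Ee A})\cong\star_{P_\Ee B}$ literally \emph{is} $P$-connectedness of $r(f)$. This is more elementary and does not even use consistency, but it is tied to the specific pseudo-functor structure $P\circ r$ you chose --- which is, to be fair, the structure the statement ``putting $P_\Ee B=P(r(B))$'' intends.
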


\begin{proof}By $P$-reflectivity, the induced functor on slice categories $\Ee/B\to\Dd/r(B)$ has a right adjoint which is fully faithful at $P$-coverings. This implies that the comprehension functor $c_B:\Ee/B\to P_\Ee B$ has a fully faithful right adjoint so that $P_\Ee$ is a comprehension scheme. If $P$ is consistent, i.e. $P$-coverings compose and are left cancellable, then the same is true for $P_\Ee$-coverings by $P$-reflectivity. If this is the case, a morphism $f$ is $P_\Ee$-connected precisely when $f$ is left orthogonal to all $P_\Ee$-coverings. By adjunction this amounts to the condition that $r(f)$ is left orthogonal to all $P$-coverings, i.e. $r(f)$ is $P$-connected.\end{proof}

\begin{rmk}Proposition \ref{extension} and Theorem \ref{main} recover one of the main results of Cassidy-H\'ebert-Kelly \cite{CHK}, namely : assume that for a given stable, composable and left cancellable class $\Rr$ of morphism in $\Ee$ the following two conditions hold:\begin{enumerate}\item the full subcategory $\Dd$ of $\Ee$ spanned by the objects $B$ such that $B\to\star_\Ee$ belongs to $\Rr$ is reflective in $\Ee$;\item The class $\Ll$ of those morphisms which are inverted by the reflection (1) is closed under pullback along morphisms in $\Rr$.\end{enumerate}Then $(\Ll,\Rr)$ is a complete orthogonal factorisation system on $\Ee$.

Indeed, the reflective subcategory $\Dd$ is equipped with the consistent comprehension scheme $PB=\Dd/B$. Since the unit-components of the adjunction are inverted by the reflection, condition (2) implies that $\Dd$ is $P$-reflective. Therefore $P$ extends to a consistent comprehension scheme $P_\Ee$ inducing a complete orthogonal factorisation system on $\Ee$. $P_\Ee$-coverings coincide with $\Rr$-morphisms while $P_\Ee$-connected morphisms are those whose reflection is $P$-connected, i.e. invertible.\end{rmk}

\begin{rmk}\label{fibredreflection}For a given full reflective subcategory $\Dd$ of $\Ee$ with comprehension scheme $P$ it may be difficult to check $P$-reflectivity using Definition \ref{reflectivity}. In view of Proposition \ref{extension} and Theorem \ref{main}, the resulting class of $P_\Ee$-coverings is \emph{stable} (i.e. pullbacks along arbitrary maps in $\Ee$ exist and are $P_\Ee$-coverings) and, for each $P_\Ee$-covering $f:A\to B$, the reflection $r(f)$ is a $P$-covering and the naturality square$$\xymatrix{A\ar[r]^{\eta_A}\ar[d]_f&r(A)\ar[d]^{r(f)}\\B\ar[r]_{\eta_B}&r(B)}$$is cartesian. Conversely, if the class of those morphisms $f:A\to B$ for which $r(f)$ is a $P$-covering and the naturality square is cartesian, forms a \emph{stable} class of morphisms of $\Ee$, then $\Dd$ is $P$-reflective and the stable class represents precisely the $P_\Ee$-coverings. It is often easier to check $P$-reflectivity using this second method.\end{rmk}

\begin{sct}\textbf{Simplicial sets.}\label{simplicial} Through the nerve functor the category $\Gpd$ of groupoids is a full reflective subcategory of the category $\widehat{\Delta}=\Set^{\Delta^\op}$ of simplicial sets. The reflection $\Pi_1:\widehat{\Delta}\to\Gpd$ is usually called the \emph{fundamental groupoid functor}. The subcategory of groupoids is $P$-reflective in $\widehat{\Delta}$ with respect to the comprehension scheme $P$ of Section \ref{groupoid} as follows from Remark \ref{fibredreflection} applied to the stable class of \emph{discrete Kan fibrations}. Indeed, for any discrete Kan fibration $f:A\to B$, the induced functor $\Pi_1(f):\Pi_1(A)\to\Pi_1(B)$ is a $P$-covering of groupoids, and the naturality square is cartesian because the comparison map $A\to B\times_{\Pi_1(A)}\Pi_1(B)$ is bijective on $0$-simplices, and hence invertible \cite[Appendix I, Proposition 2.4.2]{GZ}.

According to Proposition \ref{extension} there is an extended comprehension scheme $P_{\widehat{\Delta}}$. The latter induces the usual covering theory for simplicial sets, cf. Gabriel-Zisman \cite[Appendix I.2-3]{GZ}. As we have seen, the $P_{\widehat{\Delta}}$-coverings are discrete Kan fibrations. The associated comprehensive factorisation of a simplicial map yields in particular the universal covering for any based simplicial set, cf. Section \ref{galois}.

The adjunction $|\!-\!|:\widehat{\Delta}\lrto\Top_{lsc}:\Sing$ has the property that both, the geometric realisation functor and the singular functor, preserve coverings. Since the counit of the adjunction is a cartesian natural transformation when restricted to coverings, an orthogonality argument shows that the geometric realisation functor takes $P_{\widehat{\Delta}}$-connected simplicial maps to $P_{top}$-connected continuous maps. In other words, geometric realisation preserves comprehensive factorisations.

A continuous map $f$ has connected homotopy fibres if and only if the induced map $\Pi_1(f)=\Pi_1(\Sing(f))$ on fundamental groupoids has connected homotopy fibres.  Quillen's Theorem B \cite{Q} combined with \cite[Proposition 2]{StWa} shows that a map of groupoids has connected homotopy fibres if and only if it is $P$-connected.  In virtue of Proposition \ref{extension}, the analogous statement is true for a map of simplicial sets, resp. a continuous map of topological spaces.\end{sct}

\section{Feynman categories and multicategories}\label{feynman}

Hermida characterises in \cite{H} monoidal categories as special non-symmetric multicategories, namely as the representable one's. The idea of Feynman categories \cite{KW} is somehow opposite, namely to consider multicategories as special symmetric monoidal categories. This second point of view yields a parallel understanding of the standard comprehension schemes for Feynman categories and multicategories.

An important role is played by \emph{permutative categories} \cite{May} (which are symmetric \emph{strict} monoidal categories) because the \emph{free} permutative category $\Vv^\otimes$ generated by a category $\Vv$ admits a useful explicit description (cf. e.g. \cite{EM}). In particular, if $\Vv$ is a groupoid then so is $\Vv^\otimes$. For any category $\Cc$, we denote by $\Cc_{iso}$ the subcategory of invertible morphisms. If $\Cc$ is symmetric monoidal, then so is $\Cc_{iso}$.

We shall call a symmetric monoidal category $\FF$ \emph{framed} if there is a groupoid $\Vv$ equipped with a full embedding $\iota:\Vv\inc\FF$ such that the induced functor $\Vv^\otimes\to\FF_{iso}$ is an equivalence of symmetric monoidal categories. In particular, any framed symmetric monoidal category has an essentially small underlying category.

Framed symmetric monoidal categories are thus triples $(\FF,\Vv,\iota)$. They form a category with morphisms $(\FF_1,\Vv_1,\iota_1)\to(\FF_2,\Vv_2,\iota_2)$ the pairs $(\phi,\psi)$ consisting of a strong symmetric monoidal functor $\phi:\FF_1\to\FF_2$ and a map of groupoids $\psi:\Vv_1\to\Vv_2$ such that $\phi\iota_1=\iota_2\psi$.

Any framed symmetric monoidal category $(\FF,\Vv,\iota)$ induces a small multicategory $\Oo_\FF$ (aka coloured symmetric operad) with same objects as $\Vv$ and multimorphisms $$\Oo_\FF(v_1,\dots,v_k;v)=\FF(\iota(v_1)\otimes\cdots\otimes\iota(v_k),\iota(v)).$$The groupoid $\Vv$ coincides with the groupoid of invertible unary morphisms of $\Oo_\FF$. Conversely, any small multicategory $\Oo$ induces a framed symmetric monoidal category $(\FF_\Oo,\Vv_\Oo,\iota_\Oo)$: the groupoid $\Vv_\Oo$ is the groupoid of invertible unary morphisms of $\Oo$, the objects of $\FF_\Oo$ are those of $(\Vv_\Oo)^\otimes$, written as tensor products of objects of $\Vv$, and the morphisms of $\FF_\Oo$ are given by\begin{equation*}\FF_\Oo(v_1\otimes\cdots\otimes v_k,w_1\otimes\cdots\otimes w_l)
=\coprod_{\phi:\{1,\dots,k\}\to\{1,\dots,l\}}\Oo(v_{\phi^{-1}(1)};w_1)\times\cdots\times\Oo(v_{\phi^{-1}(l)};w_l)\end{equation*}where for any ordered subset $I=(i_1<\dots<i_r)$ of $\{1,\dots,k\}$, the symbol $v_I$ stands for the sequence $v_{i_1},\dots,v_{i_r}$.

The assignment of a symmetric monoidal category to a multicategory occurs at several places in literature. The one-object case goes back to May-Thomason \cite{MT}. The formula above occurs in Elmendorf-Mandell \cite[Theorem 4.2]{EM}. Hermida \cite{H} uses a similar functor from non-symmetric multicategories to monoidal categories. The idea of bookkeeping a ``framing'' goes back to Getzler's ``patterns'' \cite{Ge}, where the functor $\iota^\otimes:\Vv^\otimes\to\FF$ is only supposed to be essentially surjective.

The two assignments $\Oo\mapsto\FF_\Oo$ and $\FF\mapsto\Oo_\FF$ form an adjunction between small multicategories and framed symmetric monoidal categories. For each small multicategory $\Oo$ the unit-component $\Oo\to\Oo_{\FF_\Oo}$ is invertible. Small multicategories form thus a full coreflective subcategory of framed symmetric monoidal categories.

We arrive at the following reformulation of the definition of a Feynman category of \cite{KW}: A \emph{Feynman category} is a framed symmetric monoidal category $(\FF,\Vv,\iota)$ which is \emph{hereditarily framed} in the sense that the double slice category $\FF\downarrow\FF$ itself is a framed symmetric monoidal category with respect to the ``groupoid'' $(\FF\downarrow\Vv)_{iso}$, i.e. the canonical map $(\FF\downarrow\Vv)^\otimes_{iso}\to(\FF\downarrow\FF)_{iso}$ is an equivalence of symmetric monoidal categories. Notice that our smallness condition ($\Vv$ small) is slightly more restrictive than the one used in \cite{KW}.

\begin{prp}The counit-component $\FF_{\Oo_\FF}\to\FF$ is an equivalence of framed symmetric monoidal categories precisely when $\,\FF$ is a Feynman category.\end{prp}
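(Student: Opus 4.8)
Write $\kappa:\FF_{\Oo_\FF}\to\FF$ for the counit-component. As the counit of the adjunction it is a morphism of framed symmetric monoidal categories, hence in particular a strong symmetric monoidal functor compatible with the framings. The plan is to reduce the assertion ``$\kappa$ is an equivalence'' to a hom-set bijection, and then to recognise that bijection as the hereditary framing condition defining a Feynman category.

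First I would pin down what $\kappa$ does. Since $\iota$ is a full embedding, the unary morphisms of $\Oo_\FF$ are $\Oo_\FF(v_1;v)=\FF(\iota(v_1),\iota(v))\cong\Vv(v_1,v)$, so the framing groupoid $\Vv_{\Oo_\FF}$ of $\FF_{\Oo_\FF}$ is (canonically identified with) $\Vv$, and the objects of $\FF_{\Oo_\FF}$ are the formal tensors $v_1\otimes\cdots\otimes v_k$ of objects of $\Vv$, sent by $\kappa$ to $\iota(v_1)\otimes\cdots\otimes\iota(v_k)$. The framing equivalence $\Vv^\otimes\to\FF_{iso}$ is in particular essentially surjective, so every object of $\FF$ is isomorphic to such a tensor; hence $\kappa$ is always essentially surjective, and $\kappa$ is an equivalence if and only if it is fully faithful. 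Because every object of $\FF_{\Oo_\FF}$ is a tensor of generators, full faithfulness of $\kappa$ is exactly the requirement that, for all tuples, the map induced by $\kappa$ on the explicit hom-description of $\FF_{\Oo_\FF}$,
\[
\coprod_{\phi\colon\{1,\dots,k\}\to\{1,\dots,l\}}\ \prod_{j=1}^{l}\FF\Bigl(\bigotimes_{i\in\phi^{-1}(j)}\iota(v_i),\,\iota(w_j)\Bigr)\ \xrightarrow{\ \kappa\ }\ \FF\Bigl(\bigotimes_{i=1}^{k}\iota(v_i),\ \bigotimes_{j=1}^{l}\iota(w_j)\Bigr),
\]
be a bijection.

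Next I would identify this bijection with the hereditary framing condition, i.e. with the assertion that the canonical functor $(\FF\downarrow\Vv)^\otimes_{iso}\to(\FF\downarrow\FF)_{iso}$ is an equivalence. Reading the comma categories concretely, an object of $(\FF\downarrow\Vv)_{iso}$ is a morphism $X\to\iota(w)$ with target a generator, and a typical object of the free permutative category $(\FF\downarrow\Vv)^\otimes_{iso}$ is a sequence of such, sent by the canonical functor to the tensor morphism $\bigotimes_j X_j\to\bigotimes_j\iota(w_j)$ regarded as an object of $\FF\downarrow\FF$. Essential surjectivity of this functor then says precisely that every morphism of $\FF$ with target a tensor of generators decomposes, up to isomorphism of source and target, as a tensor of morphisms into the single generators $\iota(w_j)$; combined with the framing (so that each source of such a factor is itself isomorphic to a tensor of generators $\bigotimes_{i\in\phi^{-1}(j)}\iota(v_i)$) this is exactly surjectivity of the displayed map, the function $\phi$ recording how the source factors are distributed over the targets. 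Full faithfulness of the functor says that such a decomposition is unique up to reindexing and factorwise isomorphism, which is exactly injectivity of the displayed map, the coproduct over $\phi$ together with the $\Vv$-groupoid of automorphisms of each comma object accounting for the permutations and isomorphisms identified in the fibres. Thus the displayed bijection holds for all tuples if and only if $\FF$ is hereditarily framed, and both implications of the proposition follow from this equivalence.

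The routine part is checking that $\kappa$ is strong symmetric monoidal and restricts to the identity on framings, which is built into its being the counit in the category of framed symmetric monoidal categories. The main obstacle I anticipate is the bookkeeping in the previous paragraph: one must match isomorphisms in the arrow groupoid $(\FF\downarrow\FF)_{iso}$ (commuting squares whose horizontal legs are invertible) with the data encoded by the free permutative category on $(\FF\downarrow\Vv)_{iso}$, and in particular verify that injectivity of the hom-map corresponds to full faithfulness, so that the indexing function $\phi$ and each tensor factor are pinned down (up to the permutations and factorwise isomorphisms already quotiented out). Care is needed that the symmetric-group and groupoid actions on the two sides correspond exactly; once this dictionary is set up, the equivalence between the displayed bijection and hereditary framing is immediate.
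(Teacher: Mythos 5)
Your proof is correct, but note that the paper offers no argument of its own for this proposition: it simply cites \cite[Section 1.2, Remark 1.4.2 and Section 1.8.3]{KW}, and alternatively the Batanin--Kock--Weber results on pinned symmetric monoidal categories \cite{BKW}. Your proposal is in substance a self-contained unfolding of the first citation, and the architecture is right: $\kappa$ is always essentially surjective by the framing axiom; every object of $\FF_{\Oo_\FF}$ is a formal tensor of objects of $\Vv$, so full faithfulness of $\kappa$ is exactly bijectivity of your displayed hom-map; and bijectivity for all tuples is equivalent to the hereditary condition. On the one delicate step you flag --- matching full faithfulness of $(\FF\downarrow\Vv)^\otimes_{iso}\to(\FF\downarrow\FF)_{iso}$, which gives uniqueness of decompositions only up to permutation and factorwise isomorphism, with strict injectivity of the hom-map, i.e.\ uniqueness of the pair $(\phi,(f_j))$ on the nose --- the mechanism that makes the dictionary work deserves to be named: two preimages $(\phi,(f_j))$ and $(\phi',(f'_j))$ of the same morphism yield an isomorphism in $(\FF\downarrow\FF)_{iso}$ whose \emph{target} component is the identity; applying faithfulness of $\Vv^\otimes\to\FF_{iso}$ to the target component of any lift forces the lifted permutation and the target isomorphisms to be identities, and then comparing source components inside $\Vv^\otimes$, where morphisms are literally permutations with factorwise $\Vv$-morphisms and block-diagonal ones decompose uniquely, forces $\phi=\phi'$ and $f_j=f'_j$. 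Symmetrically, in the direction ``hom-bijection implies hereditary'', the sources of general comma objects are only \emph{isomorphic} to tensors of generators and must first be replaced by such before this comparison applies. With these points filled in your proof is complete; what it buys over the paper is self-containedness, while the paper's citation of \cite{BKW} buys brevity and a more general statement at no extra cost.
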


This proposition follows from \cite[Section 1.2, Remark 1.4.2 and Section 1.8.3]{KW}. It also follows from a general statement of Batanin, Kock and Weber about \emph{pinned} symmetric monoidal categories, cf. \cite[Proposition 4.2, Theorems 5.13 and 5.15]{BKW}. In particular, the $2$-categories of small multicategories and of Feynman categories are $2$-equivalent. This $2$-equivalence respects the respective notions of \emph{algebra}.

An algebra for a Feynman category $\FF$ or, as we shall say, an \emph{$\FF$-operad} is a strong symmetric monoidal functor $\FF\to(\Set,\times,\star_\Set)$. $\FF$-operads and symmetric monoidal natural transformations form a locally finitely presentable category $\FF\opd$, cf. Getzler \cite[Theorem 2.10]{Ge}. Therefore, Freyd's Adjoint Functor Theorem applies, and each Feynman functor $f:(\FF,\Vv,\iota)\to(\FF',\Vv',\iota')$ has a limit-preserving restriction functor $f^*:\FF'\opd\to\FF\opd$ which comes equipped with a left adjoint extension functor $f_!:\FF\opd\to\FF'\opd$, see also \cite[Theorem 1.6.2]{KW}.

\begin{rmk}\label{hereditary}It is fundamental that all these extension functors are given by \emph{pointwise left Kan extension}, i.e. for any $\FF$-operad $F$ and object $Y$ of the target $\FF'$, the extension $f_!(F)$ at $Y$ is given by $(f_!F)(Y)=\colim_{f(-)\downarrow Y}F(-)$ where the colimit is computed in sets. This property is one of the main advantages of Feynman categories over multicategories.

Let us sketch the argument: since for any functor $\Vv\to\Vv'$, extension along the induced functor of permutative categories $\Vv^\otimes\to(\Vv')^\otimes$ is a pointwise left Kan extension, it suffices to show that for any Feynman category $(\FF,\Vv,\iota)$, extension along $\iota^\otimes:\Vv^\otimes\to\FF$ is a pointwise left Kan extension. This amounts to showing that pointwise left Kan extension takes permutative functors $\Vv^\otimes\to\Set$ to strong symmetric monoidal functors $\FF\to\Set$. This in turn can be reduced to the following property: for each decomposition $X\cong v_1\otimes\cdots\otimes v_k$ of an object $X$ of $\FF$ into a tensor product of objects of $\Vv$, the canonical map $$(\FF/v_1)_{iso}\times\cdots\times(\FF/v_k)_{iso}\to(\FF/X)_{iso}$$ is a final functor, i.e. has connected coslices. This last condition is a reformulation of the hereditary condition of $\FF$, cf. \cite[Section 1.8.5]{KW}.

Let us mention that Batanin, Kock and Weber establish the following converse statement, cf. \cite[Propositions 2.11, 3.14, 4.2]{BKW}: if for a framed symmetric monoidal category $(\FF,\Vv,\iota)$, extension along $\iota^\otimes:\Vv^\otimes\to\FF$ is given by pointwise left Kan extension, then $\FF$ is hereditarily framed, i.e. a Feynman category.\end{rmk}

\begin{prp}There is a consistent comprehension scheme for Feynman categories assigning to a Feynman category $\FF$ the category of set-valued $\FF$-operads.\end{prp}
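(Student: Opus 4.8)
The plan is to verify the three defining data of a comprehension scheme on $\Ee$ the category of Feynman categories and Feynman functors, with $P\FF=\FF\opd$, and then to establish consistency. The distinguished terminal object $\star_{P\FF}$ is the constant $\FF$-operad at the singleton $\star_\Set$: this functor is strong symmetric monoidal because $\star_\Set$ is simultaneously the monoidal unit and the terminal object of $(\Set,\times,\star_\Set)$, and it is terminal in $\FF\opd$ since every $\FF$-operad admits a unique symmetric monoidal transformation to it. For each Feynman functor $f$ the adjunction $f_!:\FF\opd\lrto\FF'\opd:f^*$ has already been produced above as restriction together with its left adjoint extension; pseudo-functoriality in $f$ is the usual coherence of iterated left Kan extensions, so we obtain a pseudo-functor $P:\Ee\to\Adj_*$.

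First I would identify the comprehension functor. By the pointwise formula of Remark \ref{hereditary}, $c_{\FF'}(f)=f_!(\star_{P\FF})$ is the $\FF'$-operad $Y\mapsto\colim_{f(-)\downarrow Y}\star_\Set=\pi_0(f(-)\downarrow Y)$, the operad of connected components of the fibres of $f$, exactly the Feynman analogue of the covariant scheme $\Set^{(-)}$ on $\Cat$ from \ref{Cat}.

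The heart of the matter is the object of elements, and this is where I expect the main obstacle. Given an $\FF'$-operad $X$, I would take $\el_{\FF'}(X)$ to be the decorated Feynman category of Kaufmann--Lucas \cite{KL}: its objects are pairs $(Y,x)$ with $Y$ an object of $\FF'$ and $x\in X(Y)$, its morphisms $(Y,x)\to(Y',x')$ are the $\phi:Y\to Y'$ in $\FF'$ with $X(\phi)(x)=x'$, and $p_{\FF'}(X):\el_{\FF'}(X)\to\FF'$ is the evident projection. The nontrivial point is that this decorated construction is again a \emph{Feynman} category and that the projection is a Feynman functor; this is the content of \cite{KL}, and it is precisely where the hereditary condition recalled in Remark \ref{hereditary} is needed, since the pointwise left Kan extension property is what lets the decoration be carried fibrewise. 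Granting this, the universal property of Lemma \ref{elements} unwinds directly: a symmetric monoidal transformation $\star_{P\FF''}\to h^*(X)$ is a compatible family of elements $x_Z\in X(h(Z))$, which is the same datum as a lift of $h:\FF''\to\FF'$ through $p_{\FF'}(X)$.

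Finally I would check that $p_{\FF'}$ is fully faithful and deduce consistency. By the pointwise formula again, $(c_{\FF'}p_{\FF'}(X))(Y)=\pi_0\big(p_{\FF'}(X)(-)\downarrow Y\big)$; as in the $\Cat$ case of \ref{Cat}, any object $\big((Z,x),\phi:Z\to Y\big)$ of this comma category maps to the canonical representative $\big((Y,X(\phi)(x)),\mathrm{id}_Y\big)$, and distinct elements of $X(Y)$ index distinct components, so the set is naturally $X(Y)$ and the counit $c_{\FF'}p_{\FF'}(X)\to X$ is invertible. For consistency I would invoke Lemma \ref{slice}, showing that an $\el_{\FF'}(X)$-operad amounts to an $\FF'$-operad equipped with a map to $X$, so that $P\el_{\FF'}(X)\to P\FF'/X$ is an equivalence; alternatively, by Proposition \ref{consistent} it suffices that the $P$-coverings---the projections from decorated Feynman categories---compose and are left cancellable, which follows from the functoriality of the decoration construction in \cite{KL}.
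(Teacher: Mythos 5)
Your proposal is correct and follows essentially the same route as the paper: both take the object of elements to be the decorated Feynman category of Kaufmann--Lucas, both identify the lifting of the hereditary condition along the discrete opfibration $\el_{\FF'}(X)\to\FF'$ (citing \cite{KL}) as the one nontrivial point, and both use the pointwise left Kan extension property of Remark \ref{hereditary} to reduce the remaining verifications (universal property, invertibility of the counit, consistency) to the corresponding facts for the category of elements on $\Cat$. The only difference is one of bookkeeping: you spell out the counit computation and invoke Lemma \ref{slice}/Proposition \ref{consistent} explicitly, whereas the paper delegates these checks wholesale to the $\Cat$ case.
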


\begin{proof}We first exhibit a \emph{Feynman category of elements} $\el_\FF(F)$ over $\FF$ with the universal property of Lemma \ref{elements} for each $(\FF,\Vv,\iota)$-operad $F$, closely following \cite{KL}. Indeed, the usual category of elements $\el(F)$ of the underlying diagram $F:\FF\to\Set$ comes equipped with a Feynman category structure: for objects $(X,x\in F(X))$ and $(Y,y\in F(Y))$, the tensor is given by $(X,x)\otimes(Y,y)=(X\otimes Y,\phi^{X,Y}_F(x,y))$ where $\phi^{X,Y}_F:F(X)\times F(Y)\cong F(X\otimes Y)$ is the symmetric monoidal structure of $F$. This endows $\el(F)$ with the structure of a symmetric monoidal category. Moreover $\el(F\iota)$ is a groupoid (cf. Section \ref{groupoid}) equipped with a full embedding $\el(\iota):\el(F\iota)\inc\el(F)$. Since we have isomorphisms $\el(F\iota)^\otimes\cong\el(F\iota^\otimes)$ and $\el(F)_{iso}\cong\el(F_{|\FF_{iso}})$, we get a framed symmetric monoidal category $\el_\FF(F)=(\el(F),\el(F\iota),\el(\iota))$ over $\FF$.

Since the projection $\el_\FF(F)\to\FF$ is a discrete opfibration, the hereditary condition of $\FF$ (as formulated in Remark \ref{hereditary}) lifts to $\el_\FF(F)$, see \cite{KL} for a detailed proof. The latter is thus a Feynman category over $\FF$. Since extensions along Feynman functors are computed as pointwise left Kan extensions, the fact that the usual category of elements construction defines a consistent comprehension scheme on $\Cat$ (cf. Section \ref{Cat}) implies that the Feynman category of elements construction defines a consistent comprehension scheme for Feynman categories.\end{proof}

\begin{rmk}Consistency of the comprehension scheme implies that for any $\FF$-operad $F$, the category of $\FF$-operads over $F$ is canonically equivalent to the category of $\el_\FF(F)$-operads, cf. Lemma \ref{slice}. Moreover, any Feynman functor $f:\FF\to\FF'$ factors as a connected functor $\FF\to\el_{\FF'}(f_!(\star_{\Fop}))$ followed by a covering projection $\el_{\FF'}(f_!(\star_{\Fop}))\to\FF'$ and this factorisation is unique up to isomorphism. The existence of such a factorisation has been proved in \cite{KL}, but its uniqueness is new.

The same statements hold for small multicategories $\Oo$ by restriction of the comprehension scheme, cf. Definition \ref{restriction}. Any $\Oo$-algebra $A$ defines a \emph{multicategory of elements} $\el_\Oo(A)$ with objects pairs $(X\in\Ob\Oo, x\in A(X))$ and multimorphisms $$\el_\Oo(A)((X_1,x_1),..,(X_k,x_k);(X,x))=\{f\in\Oo(X_1,..,X_k;X)\,|\,A(f)(x_1,..,x_k)=x\}.$$ The resulting equivalence of categories $\el_\Oo(A)\textrm{-}\Alg\simeq\Oo\textrm{-}\Alg/A$ is folklore but our proof seems to be the first written account of it. The comprehensive factorisation of a multifunctor extends Street and Walters' comprehensive factorisation of a functor.\end{rmk}

\begin{rmk}In Section \ref{scheme} we developed comprehension schemes and comprehensive factorisations for set-based categories considering the cartesian product as symmetric monoidal structure. Parts of the theory extend to comprehension schemes taking values in symmetric monoidal categories and adjunctions with symmetric lax comonoidal left adjoint and symmetric lax monoidal right adjoint. This leads to a category of elements construction for symmetric lax (co)monoidal functors. Some steps in this direction are made in \cite[Section 3.2]{KW} and \cite[Section 2.2]{KL}.\end{rmk}

\begin{sct}\textbf{Planar-cyclic and surface-modular operads.} Cyclic and modular operads have been introduced by Getzler and Kapranov \cite{GK} as tools to understand moduli spaces of surfaces and algebraic curves. Since their introduction they have proved useful in other areas of mathematics as well, e.g. in combinatorics, in computer science or in mathematical physics. One of our motivations in writing this text on comprehensive factorisations was a recent article of Markl \cite{Ma} in which he defines a new class of modular-like operads, based on the combinatorics of ``polycylic orderings'', with the advantage over modular operads of having less built-in symmetries. With the comprehensive factorisation in hand, we shall see that Markl's definition is a very natural one and to some extent the only possible.

We shall consider the following commutative diagram of Feynman categories

 \begin{equation*}
\label{modulardiag}
\xymatrix{\FF_{\neg\,\mathrm{sym}}\ar[rr]^{i'}\ar[d]^{p(\tau_{assoc})}&&\FF_{\neg\,\mathrm{cyc}} \ar[rr]^{k'} \ar[d]^{p(\tau_{planar})} && \FF_{\neg\,\mathrm{mod}}\ar[d]^{p(\tau_{ribbon})} \\
\FF_{sym}\ar[rr]_i&&\Fcyc \ar[rr]_k\ar[drr]_j && \Fmod\ar[d]^{p(\tau_{genus})}\\&&&&\FF_{ctd}}
\end{equation*}in which the horizontal Feynman functors $i',k',k$ are connected and \emph{all} vertical Feynman functors are coverings. In virtue of the uniqueness of comprehensive factorisations, the whole diagram is entirely determined by the Feynman functors $i$ and $j$ together with the $\FF_{sym}$-operad $\tau_{assoc}$. In particular, we have the identifications $j_!(\star_{\Fcyc\opd})=\tau_{genus},\quad i_!(\tau_{assoc})=\tau_{planar}$ and $k_!(\tau_{planar})=\tau_{ribbon}$.

The Feynman category $\FF_{sym}$ has as objects sequences $(n_1,\dots,n_k)$ of natural numbers which we identify with disjoint unions of corollas $\star_{n_1}\sqcup\cdots\sqcup\star_{n_k}$ having $n_i$ flags\footnote{In our context, a graph is given by a quadruple $(V,F,s,\iota)$ where $V$ is a set of vertices, $F$ a set of abstract flags, $s:F\to V$ the source-map and $\iota:F\to F$ an involution. A fixpoint under $\iota$ is called a flag, a non-fixpoint a half-edge. The orbits formed by two half-edges are called edges. Each graph can be topologised in such a way that edges become homeomorphic to $[0,1]$ or $S^1$ and flags homeomorphic to $[0,1[$. A corolla is a connected graph without edges.} respectively. The generating morphisms $\star_{n_1}\sqcup\cdots\sqcup\star_{n_k}\to\star_n$ are represented by rooted trees having $k$ vertices and $n$ flags such that each source-corolla is identified with the open neighborhood of a specific vertex of the tree and such that the target-corolla is identified with the tree itself after contraction of all its edges. It is important that these generating morphisms are represented not just by abstract trees, but by trees having \emph{all} their half-edges (resp. flags) identified with one and exactly one flag of the source (resp. target). The symmetric monoidal structure of $\FF_{sym}$ is given by disjoint union, i.e. general morphisms are represented by rooted forests. Composition of generating morphisms corresponds to insertion of one rooted tree into a specific vertex of another rooted tree (cf. either \cite[Part IV]{BB} or \cite[Appendix]{KW} for precise definitions). The underlying multicategory of $\FF_{sym}$ is isomorphic to the $\NN$-coloured symmetric operad of \cite[1.5.6]{BM} whose algebras are symmetric operads. Therefore, $\FF_{sym}$-operads are symmetric operads as well.

Every class $\Gamma$ of graphs which is closed under the process of inserting a graph of $\Gamma$ into the vertex of another graph of $\Gamma$ gives rise to a well-defined Feynman category $\FF^\Gamma$, and hence also to a well-defined multicategory $\Oo^\Gamma$. The Feynman category $\FF_{sym}$ corresponds thus to the insertional class of rooted trees.

The Feynman category $\Fcyc$ corresponds to the insertional class of general unrooted trees. The $\Fcyc$-operads are precisely the cyclic operads of Getzler-Kapranov. The Feynman functor $i:\FF_{sym}\to\Fcyc$ is defined by assigning to a rooted tree its underlying unrooted tree. This increases ``symmetry'' because the symmetry group of a rooted corolla $\star_{n+1}$ is $\Sigma_n$ while the symmetry group of $i(\star_{n+1})$ is $\Sigma_{n+1}$.

Connected graphs form an insertional class of graphs to which corresponds the Feynman category $\FF_{ctd}$, cf. \cite{KWZ}, where this Feynman category has been denoted $\mathfrak{G}^{ctd}$. Since trees are connected we have a Feynman functor $j:\FF_{cyc}\to\FF_{ctd}$. This Feynman functor is \emph{not} connected because the extension of a terminal cyclic operad along $k$ yields the $\FF_{ctd}$-operad $\tau_{genus}$ which assigns to each corolla $\star_{n_i}$ of $\FF_{ctd}$ the set $\NN$ of natural numbers, and to each generating morphism of $\FF_{ctd}$ the operation of adding the \emph{genus} of the representing connected graph, cf. \cite[5.4.2]{KL}. The genus of a connected graph is by definition the rank of its fundamental group (which equals the number of edges not belonging to a spanning subtree).

The Feynman category $\FF_{mod}$ is the \emph{Feynman category of elements} of $\tau_{genus}$. Its operads are precisely the modular operads of Getzler-Kapranov \cite{GK}, while $\FF_{ctd}$-operads are modular operads ``without genus-labeling''. In other words, the comprehensive factorisation of $j:\Fcyc\to\FF_{ctd}$ yields the connected Feynman functor $k:\Fcyc\to\Fmod$ followed by the covering $p(\tau_{genus}):\Fmod\to\FF_{ctd}$. It is thus the genus-labeling of a modular operad which is responsible for the connectedness of $k$.

Let us now define the upper horizontal line. The $\FF_{sym}$-operad $\tau_{assoc}$ is the symmetric operad for \emph{associative monoids}. The latter associates to a rooted corolla $\star_{n+1}$ the symmetric group $\Sigma_n$ on $n$ letters. The elements of this symmetric group can be thought of as orderings of the non-root flags of $\star_{n+1}$. It follows that the value of $\tau_{assoc}$ at a generating morphism of $\FF_{sym}$ is the set of (isotopy classes of) planar embeddings of the representing rooted tree. Therefore, the Feynman category $\FF_{\neg\,\textrm{sym}}$ of elements of $\tau_{assoc}$ is equivalent to the Feynman category associated with the insertional class of planar rooted trees. Here, all symmetry groups are trivial and $\FF_{\neg\,\textrm{sym}}$-operads are precisely \emph{non-symmetric} operads.

In order to get the Feynman category $\FF_{\neg\,\mathrm{cyc}}$ we have to compute the ``cyclic envelope'' $i_!(\tau_{assoc})$ which we denote $\tau_{planar}$. Indeed, the latter assigns to a generating morphism of $\Fcyc$ the set of planar structures of its representing tree. As above, this implies that the Feynman category $\FF_{\neg\,\mathrm{cyc}}$ of elements of $\tau_{planar}$ is equivalent to the Feynman category associated with the insertional class of planar trees. This time there are non-trivial symmetry groups. For instance, a planar corolla $\star_{n+1}$ admits the cyclic group of order $n+1$ as symmetry group. We call the associated $\FF_{\neg\,\mathrm{cyc}}$-operads \emph{planar-cyclic} operads. Markl calls them non-$\Sigma$-cyclic operads.

Finally, in order to get the last Feynman category $\FF_{\neg\,\mathrm{mod}}$ we have to compute the ``modular envelope'' $j_!(\tau_{planar})$ which we denote $\tau_{ribbon}$. Although the computation of this modular envelope is quite involved, cf. \cite{BK}, the result is easy to state: one obtains for each genus-labelled corolla $\star_{g,n}$ of $\Fmod$ the set $\tau_{ribbon}(\star_{g,n})$ of \emph{equivalence classes} of one-vertex \emph{ribbon graphs} with $g$ loops and $n$ flags. These equivalence classes correspond one-to-one to polycyclic orderings of the set of flags into $b$ possibly empty cycles with the additional property that $g-b+1$ is even and nonnegative. It can be checked that $\FF_{\neg\,\mathrm{mod}}$-operads are precisely Markl's geometric non-$\Sigma$-modular operads. We call them \emph{surface-modular} operads.

A \emph{ribbon graph} is a graph $(V,F,s,\iota)$ together with cyclic orderings of the fibres $s^{-1}(v),\,v\in V$. These cyclic orderings assemble into a permutation $N:F\to F$ whose cycles are precisely the fibres of $s:F\to V$. Two ribbon graphs are equivalent if there exists a third ribbon graph which ``ribbon contracts'' to both, where ``ribbon contraction'' means contraction of a subforest. Equivalence classes of ribbon graphs correspond one-to-one to topological types of bordered oriented surfaces where the boundary components of the surface correspond to the cycles of $N_\infty=N\circ\iota$. The flags contained in such a cycle give rise to markings of the corresponding boundary component of the surface. Empty cycles correspond to empty boundaries and are usually considered as punctures of the surface. Under this correspondence the nonnegative integer $\frac{1}{2}(g-b+1)$ is the genus of the associated surface. The result above gives thus an explicit link between the combinatorics of surface-modular operads and the topological classification of bordered oriented surfaces, cf. \cite{BK}.\end{sct}

\section{Galois theory for categories with discrete comprehension scheme}\label{galois}

That Galois theory for field extensions is intimately related to covering theory for spaces has been advocated by Grothendieck \cite{Gr} and since then by many others. We try to follow this line by developing some pieces of Galois covering theory in the general context of categories equipped with a discrete comprehension scheme.

Throughout this section we fix a category $\Ee$ with consistent comprehension scheme $P$ and terminal object $\star_\Ee$ and assume that $P(\star_\Ee)$ is the \emph{category of sets}. We assume furthermore that for each $f:A\to B$ the adjunction $f_!:PA\lrto PB:f^*$ satisfies \emph{Frobenius reciprocity} or, equivalently, that pullbacks of $P$-connected morphisms along $P$-coverings are again $P$-connected, cf. Proposition \ref{Frobenius}. A consistent comprehension scheme with these two properties will be called \emph{discrete}. We shall omit $P$ from notation.

For any object $A$ we define the object $\pi_0(A)$ of connected components of $A$ by comprehensive factorisation $A\to\pi_0(A)\to\star_\Ee$ of the unique map $A\to\star_\Ee$.

An object $A$ is called \emph{discrete} (resp. \emph{connected}) if $A\to\pi_0(A)$ (resp. $\pi_0(A)\to\star_\Ee$) is invertible. A morphism $f:A\to B$ is called \emph{coherent} if $f^*:PB\to PA$ preserves coproducts.
An object $A$ is called \emph{locally connected} if $A\to\pi_0(A)$ is coherent.

\begin{prp}\label{locallyconnected}Small coproducts of copies of $\star_\Ee$ exist and are precisely the discrete objects of $\Ee$. Every locally connected object is coproduct of connected objects and this decomposition is stable under pullback along coherent maps.\end{prp}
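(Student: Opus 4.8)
The plan is to reduce all three assertions to a single geometric fact: the object-of-elements functor $\el_A$ carries a coproduct decomposition of the terminal object $\star_{PA}$ in the fibre $PA$ into a coproduct decomposition of $A$ in $\Ee$. I would organise everything around the fibres of $p:A\to\pi_0(A)$, after recording three preliminaries. First, by uniqueness of the comprehensive factorisation, $A$ is discrete exactly when $A\to\star_\Ee$ is itself a $P$-covering; hence the discrete objects are, up to isomorphism, the objects $\el_{\star_\Ee}(S)$ for $S\in P(\star_\Ee)=\Set$, and by Lemma \ref{elements} the points $\star_\Ee\to\el_{\star_\Ee}(S)$ correspond to the elements of $S$. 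Second, any morphism between discrete objects is a $P$-covering: if $D'\to D\to\star_\Ee$ has both composite and second factor $P$-coverings, then left cancellability (Proposition \ref{consistent}) makes $D'\to D$ one; in particular each point $\sigma_c:\star_\Ee\to\pi_0(A)$ is a $P$-covering. Third, by consistency $P\pi_0(A)\simeq\Set^{\pi_0(A)}$ (identifying $\pi_0(A)$ with its classifying set), under which $\sigma_c$ is classified by the family $\delta_c$ concentrated at $c$, and $\star_{P\pi_0(A)}=\coprod_c\delta_c$.

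Next comes the fibre analysis. For $c\in\pi_0(A)$ set $A_c=\star_\Ee\times_{\pi_0(A)}A$. As a pullback of the $P$-covering $\sigma_c$ along $p$, the map $A_c\to A$ is a $P$-covering (Lemma \ref{factorisation}(a)) classified by $p^*(\delta_c)\in PA$; as a pullback of the $P$-connected map $p$ along the $P$-covering $\sigma_c$, the map $A_c\to\star_\Ee$ is $P$-connected (Proposition \ref{Frobenius}(c), available since the scheme is discrete), so each $A_c$ is connected. Local connectedness of $A$, i.e. coherence of $p$, now yields $\star_{PA}=p^*(\star_{P\pi_0(A)})=p^*(\coprod_c\delta_c)=\coprod_c p^*(\delta_c)$ in $PA$, with $\el_A(\star_{PA})=A$ and $\el_A(p^*\delta_c)=A_c$.

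The crux is then to apply $\el_A$ to this decomposition and obtain $A=\coprod_c A_c$ in $\Ee$. Granting it, the second assertion follows directly. The first assertion is its special case $A$ discrete: there $p=\mathrm{id}$ and $A_c=\star_\Ee$, which simultaneously gives existence of $\coprod_S\star_\Ee=\el_{\star_\Ee}(S)$ and the identification of the discrete objects with exactly these coproducts. The third assertion follows by the same computation over $B$: for coherent $f:B\to A$ coherence gives $\star_{PB}=f^*\star_{PA}=\coprod_c f^*p^*\delta_c$ with $\el_B(f^*p^*\delta_c)=B\times_A A_c$, whence $B=\coprod_c(B\times_A A_c)$.

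The main obstacle is precisely this crux, for it asks the right adjoint $\el_A=p_A$ to preserve certain coproducts, which right adjoints do not do in general; local connectedness must be what licenses it. I would attack it by testing the coproduct universal property fibrewise: since the $\delta_c$ partition $\star_{P\pi_0(A)}$ and $\star_{PT}$ is connected in $PT$ for connected test objects $T$, every generalised point factors through exactly one $A_c$, and coherence is what propagates this splitting from $\pi_0(A)$ to $A$. Equivalently, consistency gives $PA\simeq\prod_c PA_c$ compatibly with $\el$ (via $PA_c\simeq PA/p^*\delta_c$, Lemma \ref{slice}), reducing the claim to the statement that the terminal covering of $A$ is the disjoint union of the terminal coverings of the $A_c$. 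Making this gluing precise without assuming $\Ee$ extensive a priori is the heart of the argument.
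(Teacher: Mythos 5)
Your proposal follows the paper's proof step for step: discrete objects are identified with the coverings of $\star_\Ee$ via $\Cov_{\star_\Ee}\simeq\Set$, the summands are the pullbacks $A_c=\star_\Ee\times_{\pi_0(A)}A$ along the points of $\pi_0(A)$ (coverings by left cancellability --- you reprove inline what the paper quotes as Lemma \ref{discrete}(b)), their connectedness comes from Frobenius, the splitting $\star_{PA}\cong\coprod_c p^*(\delta_c)$ comes from coherence of $p$, and stability under coherent pullback comes from the construction of pullbacks of coverings in Lemma \ref{factorisation}(a). All of this is correct and is exactly the paper's argument.

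The one step you leave open --- promoting the splitting of $\star_{PA}$ in $PA\simeq\Cov_A$ to a coproduct decomposition of $A$ in $\Ee$ --- is precisely the step the paper does not argue either: its proof passes from ``every object of $\Cov_{\star_\Ee}\simeq\Set$ is a coproduct of singletons'' to ``every discrete object of $\Ee$ is a coproduct of copies of $\star_\Ee$'' in one word (``Therefore''), and asserts the isomorphism $A\cong\coprod_i A_i$ directly from coherence. So you are not missing a hidden idea supplied by the paper. Moreover, your suspicion that some extensivity of $\Ee$ is needed is justified: the promotion is not derivable from the stated axioms. Consider $\Ee=\Set\times\Set$ with $\Rr$ the maps bijective in the second coordinate and $\Ll$ the maps bijective in the first. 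This is a complete orthogonal factorisation system; the associated comprehension scheme (Theorem \ref{mainbis}) is consistent, has $P(B_1,B_2)\simeq\Set/B_1$ and $P(\star_\Ee)\simeq\Set$ (replace $P$ by an equivalent scheme to get equality), and satisfies Frobenius, since pullbacks in $\Ee$ are computed componentwise and pullbacks of bijections are bijections. Here $\pi_0(B_1,B_2)=(B_1,1)$, every object is locally connected, and the discrete objects are the $(S,1)$; but $\coprod_S(1,1)=(S,S)$ in $\Ee$, so for $|S|\geq 2$ the discrete object $(S,1)$ is neither a coproduct of copies of $\star_\Ee$ nor the $\Ee$-coproduct of its connected components. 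What your argument (and the paper's) actually proves is that these decompositions are coproducts in $\Cov_A\simeq PA$ --- your second attack, via Lemma \ref{slice}, is the clean way to say this --- and that reading is what the paper relies on afterwards (Proposition \ref{epimono}, Theorem \ref{descent}). To get genuine coproducts in $\Ee$ one must add a hypothesis such as: the inclusions $\Cov_B\hookrightarrow\Ee/B$ preserve coproducts, which holds in all of the paper's examples. In short, your write-up matches the paper; the ``crux'' you isolated is a genuine soft spot of the proposition itself, not a defect of your approach relative to the paper's.
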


\begin{proof}By definition, the discrete objects are precisely those covering $\star_\Ee$. The category $\Cov_{\star_\Ee}$ is equivalent to $P(\star_\Ee)=\Set$ where every object is a coproduct of singletons. Therefore, every discrete object of $\Ee$ is a coproduct of copies of $\star_\Ee$. The injections of this coproduct are coverings by Proposition \ref{discrete}b. The Frobenius property implies then that each element $i:\star_\Ee\to\pi_0(A)$ induces a \emph{connected} subobject $A_i$ of $A$ by pullback along $A\to\pi_0(A)$. If $A\to\pi_0(A)$ is coherent we get a canonical isomorphism $A\cong\coprod_{i\in\pi_0(A)}A_i$. Stability under coherent pullback follows from the way pullbacks of coverings are constructed, cf. the proof of Lemma \ref{factorisation}a.\end{proof}

\begin{lma}\label{discrete}--

\begin{itemize}\item[(a)]The discrete objects form a full reflective subcategory with reflection $\pi_0$.\item[(b)]Any morphism between discrete objects is a covering.\item[(c)]Any connected morphism $A\to B$ induces a bijection $\pi_0(A)\to\pi_0(B)$.\end{itemize}\end{lma}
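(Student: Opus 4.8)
The plan is to prove the three parts of Lemma~\ref{discrete} in sequence, leaning throughout on the standing assumptions that $P(\star_\Ee)=\Set$ and that $P$ is discrete (so Frobenius reciprocity holds), together with the factorisation machinery of Theorem~\ref{main}.

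For part (a), I would first observe that by Proposition~\ref{locallyconnected} the discrete objects are exactly the coproducts of copies of $\star_\Ee$, equivalently the objects covering $\star_\Ee$, so they form the full subcategory $\Cov_{\star_\Ee}\simeq\Set$. The claim that $\pi_0$ is a reflection onto this subcategory is the assertion that the comprehensive factorisation $A\to\pi_0(A)\to\star_\Ee$ is universal among maps from $A$ to discrete objects. Concretely, I would take any map $A\to D$ with $D$ discrete and factor it comprehensively as $A\to C\to D$ with $A\to C$ connected and $C\to D$ a covering; since $D\to\star_\Ee$ is a covering and coverings compose (Proposition~\ref{consistent}), $C\to\star_\Ee$ is a covering, hence $C$ is discrete, and uniqueness of the comprehensive factorisation of $A\to\star_\Ee$ forces $C\cong\pi_0(A)$ over $\star_\Ee$. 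This yields the required bijection between maps $A\to D$ and maps $\pi_0(A)\to D$, i.e.\ $\pi_0$ is left adjoint to the inclusion.

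Part (b) should follow from the observation that discrete objects are coproducts of copies of $\star_\Ee$ and that a map between such coproducts is determined by its effect on the index sets. Explicitly, for a map $f:A\to B$ of discrete objects, I would use the equivalence $\Cov_{\star_\Ee}\simeq\Set$ to reduce to the statement that every map of sets, viewed in $\Cov_{\star_\Ee}$, corresponds to a covering; a map $A\to B$ over $\star_\Ee$ between coverings of $\star_\Ee$ is automatically itself a covering because $P$-coverings are left cancellable (both $A\to\star_\Ee$ and $B\to\star_\Ee$ are coverings, and left cancellability of coverings applied to $A\to B\to\star_\Ee$ gives that $A\to B$ is a covering). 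This is the cleanest route and avoids any explicit coproduct computation.

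Part (c) is where I expect the real content to sit. Given a connected morphism $f:A\to B$, applying $\pi_0$ (a functor by part (a)) gives a map $\pi_0(A)\to\pi_0(B)$, and I must show it is a bijection, i.e.\ an isomorphism of discrete objects. The natural strategy is to compare the comprehensive factorisations of $A\to\star_\Ee$ and $B\to\star_\Ee$: writing $f$ composed with $B\to\pi_0(B)$ and factoring comprehensively, I would argue that since $f$ is connected and the connected morphisms are the left class of an orthogonal factorisation system, the connected part of $A\to\pi_0(B)$ is $A\to\pi_0(B)$ itself (connected maps compose), so $\pi_0(A)\to\pi_0(B)$ is forced to be an isomorphism by uniqueness of factorisation. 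The main obstacle is verifying that $A\to\pi_0(B)$ is genuinely connected: this requires knowing that the composite of the connected $f:A\to B$ with the connected $B\to\pi_0(B)$ is connected, which is immediate from closure of $P$-connected morphisms under composition (used already in Theorem~\ref{main}), after which uniqueness of comprehensive factorisation of $A\to\pi_0(B)$ against the covering $\pi_0(B)\to\star_\Ee$ identifies $\pi_0(A)$ with $\pi_0(B)$.
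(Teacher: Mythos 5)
Your proposal is correct, and it runs on the same engine as the paper's proof: all three parts are formal consequences of the orthogonal factorisation system of Theorem \ref{main}. The differences lie in which manifestation of orthogonality you invoke. For (a), the paper's entire proof is a single orthogonality square: left leg the connected map $A\to\pi_0(A)$, right leg the covering $D\to\star_\Ee$, and the unique diagonal filler \emph{is} the adjunction bijection. Your factor-and-identify argument ($A\to C\to D$, then $C\cong\pi_0(A)$) proves the existence half, i.e.\ surjectivity of precomposition with $A\to\pi_0(A)$, but injectivity does not follow from existence of factorisations alone; it needs the uniqueness of diagonal fillers (equivalently, part (b) together with uniqueness of comparison isomorphisms). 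This is the one spot your write-up glosses over (``this yields the required bijection''); the fix is a one-liner, so it is presentational rather than a genuine gap. For (b) your argument is literally the paper's: left cancellability of coverings applied to $A\to B\to\star_\Ee$; the remarks about coproducts are unnecessary. For (c) the paper shows $\pi_0(A)\to\pi_0(B)$ is connected by right cancellability and is a covering by (b), hence invertible as a member of both classes; you instead observe that $A\to B\to\pi_0(B)$ is connected, so that $\pi_0(B)\to\star_\Ee$ yields a second comprehensive factorisation of $A\to\star_\Ee$, and uniqueness identifies $\pi_0(B)$ with $\pi_0(A)$. That route is equally valid and bypasses (b), at the small cost of checking that the canonical isomorphism agrees with the functorially induced map $\pi_0(f)$ (it does, again by uniqueness of fillers). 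In short: same machinery, slightly permuted; the paper's cancellability phrasing is terser, while yours makes (c) independent of (b).
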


\begin{proof}(a) The required universal property of $A\to\pi_0(A)$ follows from orthogonality $$\xymatrix{A\ar[r]\ar[d]&D\ar[d]\\\pi_0(A)\ar[r]\ar@{.>}[ru]&\star_\Ee}$$where $D$ is a discrete object, i.e. $D\to\star_\Ee$ is a covering.

(b) This follows from left cancellability of coverings.

(c) Right cancellability of connected morphisms implies that $\pi_0(A)\to\pi_0(B)$ is connected. By (b) the latter is also a covering and hence invertible.\end{proof}

A covering $A\to B$ is called an \emph{epi}covering (resp. \emph{mono}covering) if the induced mapping $\pi_0(A)\to\pi_0(B)$ is surjective (resp. injective). A morphism $A\to B$ is called \emph{complemented} if the comparison map $A\to \pi_0(A)\times_{\pi_0(B)}B$ is invertible.

\begin{prp}\label{epimono}Every covering factors essentially uniquely into an epicovering followed by a complemented monocovering.  If the codomain is locally connected the latter is the inclusion of a coproduct of connected components.\end{prp}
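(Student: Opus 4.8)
The plan is to reduce everything to the image factorisation of the map $\pi_0(f):\pi_0(A)\to\pi_0(B)$ in $\Set$ and then transport it back to $\Ee$ by pullback. Given a covering $f:A\to B$, I first form $\pi_0(f)$ using the reflection $\pi_0$ of Lemma \ref{discrete}a, and factor it in $\Set$ as a surjection $\pi_0(A)\twoheadrightarrow S$ followed by an injection $S\hookrightarrow\pi_0(B)$, where $S$ is the image. Since discrete objects are exactly copies-of-$\star_\Ee$ coproducts and $\Cov_{\star_\Ee}\simeq\Set$ (Proposition \ref{locallyconnected}), I regard $S$ as a discrete object and $S\hookrightarrow\pi_0(B)$ as a monocovering (Lemma \ref{discrete}b). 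I then \emph{define} the middle object by the pullback $C:=S\times_{\pi_0(B)}B$, which exists and is a covering of $B$ because it is pulled back from the covering $S\hookrightarrow\pi_0(B)$ (Lemma \ref{factorisation}a).

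The crux is to identify $\pi_0(C)$ with $S$, for this is what makes $C\to B$ a \emph{complemented} monocovering. Here I would use the Frobenius property in the form of Proposition \ref{Frobenius}c: the structure map $\eta_B:B\to\pi_0(B)$ is $P$-connected by definition of $\pi_0$, and $C\to S$ is its pullback along the covering $S\hookrightarrow\pi_0(B)$, hence $C\to S$ is again $P$-connected. By Lemma \ref{discrete}c a connected morphism induces a bijection on $\pi_0$, so $\pi_0(C)\xrightarrow{\sim}S$. Consequently $\pi_0(C)\hookrightarrow\pi_0(B)$ is injective, i.e. $C\to B$ is a monocovering, and it is complemented because $\pi_0(C)\times_{\pi_0(B)}B\cong S\times_{\pi_0(B)}B=C$. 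This identification of $\pi_0(C)$ is the main obstacle; everything else is formal once it is in place.

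Next I produce the left factor. The square with edges $f:A\to B$, $\eta_B$, the surjection $\pi_0(A)\twoheadrightarrow S$, and $S\hookrightarrow\pi_0(B)$ commutes by naturality of the reflection unit (since $\eta_B\circ f=\pi_0(f)\circ\eta_A$ and $\pi_0(f)$ factors through $S$), so the universal property of the pullback yields $g:A\to C$ over $B$. As $f=(C\to B)\circ g$ with $f$ and $C\to B$ coverings, left cancellability of coverings (Proposition \ref{consistent}) shows $g$ is a covering; and on $\pi_0$ the map $g$ is exactly the surjection $\pi_0(A)\twoheadrightarrow S=\pi_0(C)$, so $g$ is an epicovering. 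For essential uniqueness, any factorisation $A\xrightarrow{e}C'\xrightarrow{m}B$ into an epicovering followed by a complemented monocovering induces on $\pi_0$ a surjection followed by an injection whose composite is $\pi_0(f)$; uniqueness of the image factorisation in $\Set$ forces $\pi_0(C')\cong S$ compatibly, whence complementedness gives $C'\cong\pi_0(C')\times_{\pi_0(B)}B\cong C$ over $B$ and under $A$.

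Finally, for the locally connected case I would invoke the decomposition $B\cong\coprod_{i\in\pi_0(B)}B_i$ into connected components of Proposition \ref{locallyconnected}. Pulling back along the subset inclusion $S\hookrightarrow\pi_0(B)$ simply selects the corresponding summands, giving $C\cong\coprod_{i\in S}B_i$, so that $C\to B$ is precisely the inclusion of the coproduct of those connected components of $B$ indexed by $S=\pi_0(C)$. I expect the only delicate point throughout to be the connectedness-under-pullback step identifying $\pi_0(C)$, which is exactly where the discreteness hypothesis (Frobenius reciprocity) is used in an essential way.
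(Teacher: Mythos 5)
Your proof is correct and takes essentially the same route as the paper's: the middle object is the pullback of $B$ along the image $D\hookrightarrow\pi_0(B)$ of $\pi_0(f)$, the Frobenius property identifies $\pi_0$ of that pullback with $D$ (via connectedness of the pulled-back map $B\to\pi_0(B)$ and Lemma \ref{discrete}c), left cancellability gives the epicovering part, and essential uniqueness reduces to the (surjection, injection) factorisation in $\Set$. The only cosmetic difference is that the paper phrases uniqueness as orthogonality of epicoverings against complemented monocoverings inside $\Cov_B$, which is the same diagram chase you carry out explicitly.
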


\begin{proof}For a covering $A\to B$ consider the following commutative diagram
$$\xymatrix{A\ar[rr]^i\ar[d]&&D\times_{\pi_0(B)} B \ar@{->}[rr]^j\ar[d]^k &&B\ar[d] \\
\pi_0(A)\ar@{->}[rr]&&D\ar@{->}[rr]&& \pi_0(B)}$$in which the lower line is the ``image factorisation'' of $\pi_0(A)\to\pi_0(B)$. Since the inclusion $D\hookrightarrow\pi_0(B)$ is a covering, its pullback $j$ exists and is a covering, and hence $i$ is a covering as well, by left cancellability of coverings. The Frobenius property implies that $k$ is connected so that the discrete object $D$ gets identified with $\pi_0(D\times_{\pi_0(B)}B)$. The upper line is thus the required factorisation. Essential uniqueness amounts to orthogonality between epicoverings and complemented monocoverings inside the category of coverings. This follows from a diagram chase using Lemma \ref{discrete}a and the orthogonality between surjections and injections in $\Set$.

If $B$ is locally connected then $D\times_{\pi_0(B)} B$ is coproduct of those connected components of $B$ which are indexed by elements of $D$, cf. proof of Proposition \ref{locallyconnected}.\end{proof}

\begin{cor}\label{complement}The following three conditions are equivalent:\begin{itemize}\item[(E)]Epicoverings are strongly epimorphic inside the category of coverings;\item[(M)]Monomorphic coverings are complemented;\item[(R)]Every covering factors into a strongly epimorphic covering followed by a complemented monocovering.\end{itemize}\end{cor}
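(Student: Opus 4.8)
The plan is to establish the cyclic chain of implications $(E)\Rightarrow(R)\Rightarrow(M)\Rightarrow(E)$, working throughout inside the category of coverings (all objects of $\Ee$, covering morphisms), and relying on two elementary facts. First, a morphism which is simultaneously a strong epimorphism and a monomorphism is invertible (lift the identity square of the morphism against itself). Second, every complemented monocovering is a monomorphism, since it is by definition a pullback of an injection $\pi_0(X)\hookrightarrow\pi_0(Y)$ between discrete objects along the reflection unit $Y\to\pi_0(Y)$, and monomorphisms are stable under pullback.

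The implication $(E)\Rightarrow(R)$ requires almost nothing: Proposition \ref{epimono} already factors every covering as an epicovering followed by a complemented monocovering, and hypothesis (E) merely upgrades the epicovering to a strongly epimorphic one. For $(R)\Rightarrow(M)$ I would take a monomorphic covering $m$ and apply (R) to write $m=\mu e$ with $e$ a strongly epimorphic covering and $\mu$ a complemented monocovering. As $\mu$ is a monomorphism and $m=\mu e$ is a monomorphism, left cancellation forces $e$ to be a monomorphism too; being also a strong epimorphism, $e$ is invertible, whence $m\cong\mu$ is complemented.

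The main work is $(M)\Rightarrow(E)$, i.e. exhibiting the diagonal filler showing that an epicovering $e:A\to C$ is a strong epimorphism inside the category of coverings. Given a commuting square from $e$ to a monomorphism $m:X\to Y$ (with horizontal edges $u:A\to X$ and $v:C\to Y$), hypothesis (M) identifies $m$ with a complemented monocovering, so that $X\cong\pi_0(X)\times_{\pi_0(Y)}Y$ with $\pi_0(X)\hookrightarrow\pi_0(Y)$. The step I expect to need the most care is the following: because $e$ is an epicovering the map $\pi_0(A)\to\pi_0(C)$ is surjective, and commutativity of the square forces the image of $\pi_0(C)\to\pi_0(Y)$ to coincide with the image of $\pi_0(A)\to\pi_0(Y)$, which lies inside $\pi_0(X)$; hence $\pi_0(C)\to\pi_0(Y)$ factors through $\pi_0(X)$. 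This factorisation, paired with $v$, produces a map $d:C\to\pi_0(X)\times_{\pi_0(Y)}Y=X$ satisfying $md=v$. Commutativity $de=u$ and uniqueness of $d$ then follow formally, since $m(de)=ve=mu$ and $m$ is a monomorphism. Composing the three implications yields the equivalence of (E), (M) and (R).
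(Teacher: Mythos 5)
Your argument is correct in substance but organised quite differently from the paper's. The paper does not run a cycle of implications: it observes that strongly epimorphic coverings are always epicoverings (via the reflection $\pi_0$ onto discrete objects, Lemma \ref{discrete}) and that complemented monocoverings are always monomorphisms, so that (E) and (M) say precisely that the two classes of Proposition \ref{epimono} coincide with the strong epimorphisms, resp. the monomorphisms, of the category of coverings; the equivalence of (E), (M) and (R) is then a formal consequence of the orthogonality (essential uniqueness) established in Proposition \ref{epimono}, since in an orthogonal factorisation system each class determines the other. Your cycle $(E)\Rightarrow(R)\Rightarrow(M)\Rightarrow(E)$ replaces this appeal to factorisation-system formalism by direct constructions: cancellation plus ``strong epi and mono implies iso'' for $(R)\Rightarrow(M)$, and an explicit diagonal filler for $(M)\Rightarrow(E)$. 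Your route is more elementary and, notably, never needs the paper's first ingredient (that $\pi_0$ takes strongly epimorphic coverings to surjections), which is the more delicate point of the paper's proof since the unit $A\to\pi_0(A)$ is connected rather than a covering; the price is that your $(M)\Rightarrow(E)$ essentially re-proves, specialised along hypothesis (M), the orthogonality already recorded in Proposition \ref{epimono}.

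Two points need tightening. First, in $(M)\Rightarrow(E)$ you assert that (M) makes $m$ a complemented \emph{mono}covering, i.e. that $\pi_0(m)$ is injective; (M) literally gives only complementedness. (The paper's proof makes the same silent identification.) In your argument this is harmless once noticed: all you actually need is \emph{some} map $h:\pi_0(C)\to\pi_0(X)$ with $\pi_0(m)\circ h=\pi_0(v)$, and this follows from your image containment by choosing preimages, because the identities $md=v$ and $de=u$ and the uniqueness of $d$ are all extracted from monicity of $m$ alone and do not depend on which factorisation $h$ was chosen. (Alternatively, injectivity of $\pi_0(m)$ can be deduced from monicity plus complementedness using the Frobenius property, by comparing the fibres of $X\to\pi_0(X)$ over two points of $\pi_0(X)$ with the same image in $\pi_0(Y)$.) Second, since the orthogonality is asserted \emph{inside} the category of coverings, you should remark that your filler $d$ is automatically a covering: $md=v$ with $m$ and $v$ coverings forces this by left cancellability, i.e. by consistency of the comprehension scheme.
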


\begin{proof}According to Lemma \ref{discrete}a-b the discrete objects span a full reflective subcategory of the category of coverings. The reflection $\pi_0$ takes a strongly epimorphic covering to a surjection, i.e. every strongly epimorphic covering is an epicovering. Condition (E) expresses thus that inside the category of coverings strong epimorphisms and epicoverings coincide. Similarily, condition (M) expresses that monomorphic coverings and complemented monocoverings coincide. Since by Proposition \ref{epimono} epicoverings and complemented monocoverings form orthogonal classes in the category of coverings, conditions (E), (M) and (R) are equivalent.\end{proof}

An object $A$ is called \emph{based} if it comes equipped with a morphism $\alpha:\star_\Ee\to A$ in which case we shall write $(A,\alpha)$.

\begin{dfn}The \emph{universal covering} of a based object $(A,\alpha)$ is defined by comprehensive factorisation $\star_\Ee\to\Uu_{\alpha}\to A$ of $\alpha$. The fundamental group $\pi_1(A,\alpha)$ is the group of automorphisms of $u_{\alpha}:\Uu_{\alpha}\to A$ fixing $A$.\end{dfn}

It follows from the orthogonality of the comprehensive factorisation that for each covering $p:(B,\beta)\to(A,\alpha)$ there is one and only one lift of coverings $\Uu_{\alpha}\to B$ $$\xymatrix{\star_\Ee\ar[r]^\beta\ar[d]_{\alpha'}&B\ar[d]^p\\\Uu_{\alpha}\ar@{.>}[ru]\ar[r]_{u_{\alpha}}&A}$$taking $\alpha'$ to $\beta$. This justifies our terminology.

Since by Lemma \ref{elements}, the fundamental group $\pi_1(A,\alpha)$ can also be identified with the automorphism group of $\alpha_!(\star_\Set)$ in $PA$, based maps $f:(A,\alpha)\to(B,\beta)$ induce group homomorphisms $\pi_1(f):\pi_1(A,\alpha)\to\pi_1(B,\beta)$ in a functorial way.

\begin{rmk}The previous definitions recover the classical $\pi_0$- and $\pi_1$-functors for topological spaces, simplicial sets and groupoids with respect to the comprehension schemes discussed in Section \ref{scheme}.  Although this Galois-type definition of fundamental group a priori depends on the choice of basepoint we will see below that under certain conditions (essentially those of Corollary \ref{complement}) different basepoints of a connected, locally connected object yield isomorphic fundamental groups.

For the category of small categories we get the usual $\pi_0$-functor, but the conditions of Corollary \ref{complement} are not met and different basepoints yield here in general non-isomorphic universal coverings and non-isomorphic fundamental groups. Moreover, every based category $(A,\alpha)$ has two natural ``dual'' fundamental groups, the automorphism group of the universal $P$-covering $\alpha/A\to A$, and the automorphism group of the universal $P'$-covering $A/\alpha\to A$, cf. Section \ref{Cat}.\end{rmk}

\begin{prp}\label{monadicity}For any based object $(A,\alpha)$, the fibre functor $\alpha^*:PA\to\Set:\alpha_!$ induces a monad on sets which is isomorphic to $\!-\!\times\pi_1(A,\alpha)$ whenever $\alpha^*$ is faithful. If in addition the fibre functor is monadic, the category of $\pi_1(A,\alpha)$-sets is equivalent to the category of coverings over $A$.\end{prp}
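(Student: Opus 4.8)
The plan is to exhibit the monad $T=\alpha^*\alpha_!$ as the monoid-monad of $\mathrm{End}_{PA}(U)$, where $U=\alpha_!(\star_\Set)$, and then to use faithfulness to collapse this monoid to the group $\pi_1(A,\alpha)=\Aut_{PA}(U)$. First I would record two formal facts about the adjunction $\alpha_!\dashv\alpha^*$. Since $\star_\Set$ is terminal, the adjunction isomorphism specialises to a natural isomorphism $\alpha^*(X)\cong\mathrm{Hom}_{PA}(U,X)$, so that $\alpha^*$ is represented by $U$ and carries, by pre-composition, the canonical $\Aut_{PA}(U)$-action already used in the Introduction; under Lemma \ref{elements} this $\Aut_{PA}(U)$ is precisely $\pi_1(A,\alpha)$. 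Secondly, $\alpha_!$ is a left adjoint, hence preserves coproducts, so $\alpha_!(S)\cong\coprod_S U$.

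Next I would compute $T$. The object $U$ is connected: its object of elements $\el_A(U)=\Uu_\alpha$ satisfies $\pi_0(\Uu_\alpha)=\star_\Ee$, because $\star_\Ee\to\Uu_\alpha$ is $P$-connected and connected morphisms induce bijections on $\pi_0$ (Lemma \ref{discrete}c); through the equivalence $PA\simeq\Cov_A$ this makes $\mathrm{Hom}_{PA}(U,-)$ send coproducts of copies of $U$ to coproducts of sets. Hence
$$T(S)=\alpha^*\Bigl(\coprod_S U\Bigr)\cong\coprod_S\alpha^*(U)=S\times\mathrm{End}_{PA}(U),$$
and tracing the unit and the counit $\epsilon:\alpha_!\alpha^*\to\mathrm{id}$ through this identification shows that the induced monad structure on $T$ is exactly that of the monoid $M:=\mathrm{End}_{PA}(U)$ under composition; that is, $T$ is the monad whose algebras are $M$-sets.

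The decisive step, and the one I expect to be the main obstacle, is to show that faithfulness of $\alpha^*$ forces the monoid $M=\mathrm{End}_{PA}(U)$ to be the group $\pi_1(A,\alpha)=\Aut_{PA}(U)$. Equivalently, one must prove that the $\pi_1$-action by deck transformations (post-composition) on the fibre $\alpha^*(U)=\mathrm{End}_{PA}(U)$ is simply transitive, i.e.\ that $\alpha^*(U)$ is a $\pi_1$-torsor: transitivity identifies every $\phi\in\mathrm{End}_{PA}(U)$ with $\gamma\circ\mathrm{id}_U=\gamma$ for a unique $\gamma\in\Aut_{PA}(U)$, whence $\phi=\gamma$ is invertible. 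This is where faithfulness (equivalently, epimorphy of $\epsilon$) together with Frobenius reciprocity must be brought in, through the universal property of $\Uu_\alpha\to A$: any fibre point $\phi(\alpha')$ presents a based covering $(\Uu_\alpha,\phi(\alpha'))\to(A,\alpha)$, and one needs faithfulness to ensure that such a point again yields a $P$-connected section $\star_\Ee\to\Uu_\alpha$, so that the unique lift supplied by the universal property is a deck transformation carrying $\alpha'$ to $\phi(\alpha')$. Establishing this torsor property is the delicate point; once it is in hand, $T\cong -\times\pi_1(A,\alpha)$ is immediate, since $M=\pi_1(A,\alpha)$ as monoids.

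For the final assertion I would argue purely formally. If $\alpha^*$ is in addition monadic, then by definition the comparison functor is an equivalence $PA\simeq\Set^{T}$ onto the category of $T$-algebras; by the previous step $\Set^{T}\simeq\pi_1(A,\alpha)\text{-}\Set$, while by the equivalence $\Cov_A\simeq PA$ of Section \ref{scheme} the category $PA$ is that of coverings over $A$. Composing these equivalences yields $\pi_1(A,\alpha)\text{-}\Set\simeq\Cov_A$, as required.
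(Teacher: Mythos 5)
Your outline has the right shape (representability of $\alpha^*$ by $U=\alpha_!(\star_\Set)$, identification of $T(\star_\Set)$ with the monoid $\mathrm{End}_{PA}(U)$, and the purely formal final paragraph, which agrees with the paper), but there is a genuine gap exactly at what you call the decisive step, and that step is not merely ``delicate'': as sketched, it fails. Your mechanism for upgrading $\mathrm{End}_{PA}(U)$ to $\Aut_{PA}(U)$ invokes only faithfulness of $\alpha^*$ and the universal property of $\Uu_\alpha$, never Frobenius reciprocity in any operative way. Consider $\Ee=\Cat$ with $PA=\Set^A$ and $A=B\NN$, the one-object category on the additive monoid $\NN$. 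Here $U$ is the free $\NN$-set on one generator, $\Uu_\alpha=(\NN,\leq)$ is connected, $\alpha^*$ (the forgetful functor from $\NN$-sets to sets) is faithful, and your step 2 holds with $T\cong -\times\NN$; yet $\mathrm{End}_{PA}(U)\cong\NN$ while $\Aut_{PA}(U)=\pi_1(A,\alpha)$ is trivial, and the only $P$-connected fibre point of $\Uu_\alpha$ is $0$. (One checks directly that this example violates Frobenius reciprocity --- the canonical map $\alpha_!(X\times\alpha^*Y)\to\alpha_!(X)\times Y$ is injective but not surjective for $Y=\NN$ --- so it does not contradict the Proposition; but it shows that no argument of the form ``faithfulness plus universal property implies torsor'' can exist, so your sketch is missing the essential ingredient rather than a routine verification.) A secondary gap: your step 2 justifies $\alpha^*(\coprod_S U)\cong\coprod_S\alpha^*(U)$ by connectedness of $\Uu_\alpha$, but under the standing hypotheses nothing guarantees that coproducts in $\Cov_A$ are disjoint or universal, so a map out of a connected object need not land in a single summand; the paper only obtains such properties under the additional hypotheses of Theorem \ref{descent} (via Proposition \ref{locallyconnected} and Corollary \ref{complement}).

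The paper's proof supplies precisely the missing mechanism, by a different route. Frobenius reciprocity (Proposition \ref{Frobenius}(a), applied with $f=\alpha$ and $Y=\alpha_!X'$, then composed with $\alpha^*$) shows that the fusion operator $T(X\times TX')\to TX\times TX'$ is invertible, i.e.\ $T=\alpha^*\alpha_!$ is a Hopf monad on the cartesian category $\Set$; by Theorem 5.7 of \cite{BLV} such a monad induces a group action, the acting group being carried by $T(\star_\Set)$, with $T$-algebras the $(-\times T(\star_\Set))$-sets. Faithfulness is used only afterwards, to identify that group with $\pi_1(A,\alpha)$ via the simply transitive action of the deck transformations of $\Uu_\alpha$ on the fibre. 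In other words, the group structure --- the very point you leave open --- is extracted from Frobenius reciprocity through Hopf-monad theory, not from the universal property of the universal covering; a repaired version of your argument would have to either cite that theorem or reprove its content.
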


\begin{proof}The second statement follows from the first because $\Cov_A\simeq PA$. For the first statement we exploit the close relationship between adjunctions fulfilling Frobenius reciprocity and group actions, cf. Townsend \cite{To} and Brugui\`eres, Lack and Virelizier \cite{BLV}. Indeed, since in a cartesian context functors are automatically comonoidal, all that is needed for a monad $T$ to be a Hopf monad, and hence to induce a group action (cf. \cite[Theorem 5.7]{BLV}), is the invertibility of the fusion operator $T(X\times TX')\to TX\times TX'$. If $T=\alpha^*\alpha_!$ this follows from Frobenius reciprocity, cf. Proposition \ref{Frobenius}a, putting $f=\alpha$ and $Y=\alpha_!X'$ and applying $\alpha^*$.

The acting group has underlying set $T(\star_\Set)$ and the category of $T$-algebras is equivalent to the category of $\!-\!\times T(\star_\Set)$-sets. Since $\el_A(\alpha_!(\star_\Set))$ yields the universal covering $\Uu_\alpha$, the group $\pi_1(A,\alpha)$ acts simply transitively on $\alpha_!(\star_\Set)$ which yields the required identification in case $\alpha^*$ is faithful.\end{proof}
Let us recall the following terminology: A covering $\xi:E\to A$ is called \emph{principal} if the action-map $\Aut(\xi)\bullet E\to E\times_A E$ is invertible, where $\Aut(\xi)$ denotes the group of automorphisms of $\xi$ fixing $A$, and $\Aut(\xi)\bullet E$ denotes a coprodut of copies of $E$ indexed by the elements of $\Aut(\xi)$. We shall say that the principal covering $\xi$ admits the \emph{Borel construction} if for any $\Aut(\xi)$-set $X$, the quotient $X\times_{\Aut(\xi)}E$ of $X\bullet E$ by the diagonal $\Aut(\xi)$-action exists.

\begin{thm}\label{descent}Let $\Ee$ be a category with discrete comprehension scheme such that
\begin{itemize}\item[(i)]All objects are locally connected;\item[(ii)]The terminal object $\star_\Ee$ is projective with respect to epicoverings;\item[(iii)]Epicoverings are strongly epimorphic inside the category of coverings;\item[(iv)]Principal coverings admit the Borel construction.\end{itemize}
Then for any connected object $A$ and any basepoint $\alpha:\star_\Ee\to A$ the fibre functor $\alpha^*$ is monadic, and the category of coverings over $A$ is equivalent to the category of $\pi_1(A,\alpha)$-sets. In particular, any two basepoints of $A$ induce isomorphic fundamental groups and isomorphic universal coverings.\end{thm}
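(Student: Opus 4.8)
The plan is to establish monadicity of the fibre functor $\alpha^*:PA\to\Set$ via Beck's monadicity theorem, and then invoke Proposition \ref{monadicity} to transport the resulting equivalence to coverings. First I would verify the hypotheses of Proposition \ref{monadicity}: since $A$ is connected, I expect $\alpha^*$ to be faithful, because a covering over a connected object is determined by its fibre together with the monodromy action, and condition (ii) — projectivity of $\star_\Ee$ — should force distinct maps in $PA$ to differ on the fibre over $\alpha$. Given faithfulness, Proposition \ref{monadicity} already identifies the induced monad $T=\alpha^*\alpha_!$ with $-\times\pi_1(A,\alpha)$, so the sole remaining task is monadicity of $\alpha^*$.

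To check Beck's criterion I would proceed through its three clauses. The functor $\alpha^*$ has a left adjoint $\alpha_!$ by hypothesis. For conservativity I would again use connectedness of $A$ together with local connectedness (condition (i)): by Proposition \ref{locallyconnected} every object, in particular every covering over $A$, decomposes as a coproduct of connected components, and a morphism of coverings inducing a bijection on fibres over the basepoint must then be an isomorphism because the monodromy/$\pi_1$-action rigidifies each component. The crux is the third clause: $PA$ must have, and $\alpha^*$ must preserve, coequalisers of $\alpha^*$-split pairs. Here is where conditions (iii) and (iv) enter. A reflexive $\alpha^*$-split pair in $PA$ corresponds, under $\Cov_A\simeq PA$, to data on coverings; the split coequaliser downstairs presents the target fibre as a quotient, and I would build the corresponding coequalising covering upstairs using the principal universal covering $u_\alpha:\Uu_\alpha\to A$ and the Borel construction. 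Concretely, since $\pi_1(A,\alpha)$ acts simply transitively on $\alpha_!(\star_\Set)$ (as recorded in the proof of Proposition \ref{monadicity}), every covering over the connected $A$ is of the form $X\times_{\pi_1(A,\alpha)}\Uu_\alpha$ for a $\pi_1(A,\alpha)$-set $X$, and condition (iv) guarantees these Borel quotients exist. Coequalisers in $\pi_1(A,\alpha)$-sets are computed on underlying sets, and the Borel construction, being a left adjoint along $\alpha$, preserves them; strong epimorphy of epicoverings (condition (iii)) ensures the resulting coequaliser map really is the expected quotient covering and that $\alpha^*$ reflects this.

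The hard part will be the third clause — establishing existence and $\alpha^*$-preservation of the relevant coequalisers — since this is exactly where the abstract comprehension-scheme formalism must interface with the concrete Borel/quotient constructions, and where all four hypotheses are genuinely used together. I would organise this step by first proving the intermediate claim that $\alpha^*$ is \emph{premonadic} (i.e. the comparison functor $PA\to\Set^{T}$ is fully faithful), which follows from faithfulness and conservativity plus the simply-transitive action, and then showing the comparison functor is essentially surjective by exhibiting each $\pi_1(A,\alpha)$-set as the fibre of its Borel covering. Once $\alpha^*$ is monadic, the second sentence of Proposition \ref{monadicity} gives the equivalence $\pi_1(A,\alpha)\text{-}\Set\simeq\Cov_A$. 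For the final independence-of-basepoint assertion, I would argue that for any two basepoints $\alpha,\alpha'$ of the connected object $A$, both fibre functors $\alpha^*,(\alpha')^*$ are monadic and hence present $\Cov_A$ as a category of $G$-sets; by connectedness these two presentations have equivalent monads, and comparing the underlying groups — each the automorphism group of the corresponding universal covering, which is itself principal — yields a (non-canonical, conjugacy-class-unique) isomorphism $\pi_1(A,\alpha)\cong\pi_1(A,\alpha')$ together with a compatible isomorphism of universal coverings.
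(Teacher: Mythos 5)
Your overall architecture (faithfulness, then identification of the monad via Proposition \ref{monadicity}, then an equivalence built from the Borel construction) is close in spirit to the paper's, but three of your key steps are circular or unproven, and they are precisely where the content of the theorem lies. First, your arguments for faithfulness and conservativity of $\alpha^*$ invoke ``the monodromy action'' and the claim that it ``rigidifies each component''. No such action is available at this stage: the classification of coverings by $\pi_1$-actions is the \emph{conclusion} of the theorem, not an input. The paper instead proves that the counit $\alpha_!\alpha^*(\xi)\to\xi$, identified with $\Cov_A(u_\alpha,\xi)\bullet u_\alpha\to\xi$, is pointwise a strong epimorphism: connectedness of $\Uu_\alpha$ (right cancellability of connected morphisms) and local connectedness make the source a coproduct of connected components indexed by the fibre $\alpha^*(\xi)$; hypothesis (ii) guarantees that this fibre meets every connected component of the total object, so the counit is an epicovering; hypothesis (iii) upgrades it to a strong epimorphism. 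Faithfulness follows, and conservativity follows since a map inverted by $\alpha^*$ is then both a monomorphism (by faithfulness) and a strong epimorphism. In your proposal hypothesis (ii) never does any concrete work.

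Second, your treatment of the ``hard part'' rests on the assertion that every covering over the connected object $A$ is of the form $X\times_{\pi_1(A,\alpha)}\Uu_\alpha$; this does not follow from simple transitivity of $\pi_1(A,\alpha)$ on $\alpha_!(\star_\Set)$ --- it is essentially the essential-surjectivity half of the equivalence you are trying to establish, so you are assuming the conclusion. Third, hypothesis (iv) applies only to \emph{principal} coverings, and you nowhere verify that $u_\alpha$ is principal; the paper proves this by showing that the action map $\Aut(u_\alpha)\bullet\Uu_\alpha\to\Uu_\alpha\times_A\Uu_\alpha$ becomes, under the already-established conservative fibre functor, the invertible action map of the regular representation of $\pi_1(A,\alpha)$. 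Once these three points are repaired, your Beck-style route effectively collapses into the paper's argument: one constructs $\phi_!(X)=X\times_{\pi_1(A,\alpha)}\Uu_\alpha$ as a left adjoint to the lifted fibre functor $\phi^*:\Cov_A\to\Set^{\pi_1(A,\alpha)}$ and checks that the unit is the invertible discrete Borel construction $X\to X\times_{\pi_1(A,\alpha)}\pi_1(A,\alpha)$, so $\phi^*$ is a conservative right adjoint with fully faithful left adjoint, hence an equivalence --- no separate verification of split coequalisers is needed.
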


\begin{proof}We proceed in three steps. We show (1) that $\alpha^*$ is conservative by proving that the counit of the $(\alpha_!,\alpha^*)$-adjunction is pointwise a strong epimorphism. This implies that $\alpha^*$ is faithful and hence (by Proposition \ref{monadicity}) that $\alpha^*$ factors through a conservative functor $\phi^*:\Cov_A\to\Set^{\pi_1(A,\alpha)}$. We show (2) that $\phi^*$ has a left adjoint functor $\phi_!$ and (3) that $\phi_!$ is fully faithful proving thereby that $(\phi_!,\phi^*)$ is an equivalence and $\alpha^*$ monadic. The equivalences $\Set^{\pi_1(A,\alpha)}\simeq\Cov_A\simeq\Set^{\pi_1(A,\beta)}$ imply that the fundamental groups with respect to any basepoints $\alpha,\beta$ are isomorphic. Moreover, both universal coverings $u_\alpha,u_\beta$ correspond to the regular representation of their fundamental group and are thus isomorphic in $\Cov_A$.

For (1) observe that under the equivalences $\Cov_{\star_\Ee}\simeq P(\star_\Ee)$ and $\Cov_A\simeq PA$ the fibre functor $\alpha^*$ is given by pullback along $\alpha:\star_\Ee\to A$. The universal property of $u_\alpha:U_\alpha\to A$ shows that $\Cov_A(u_\alpha,-)$ is isomorphic to $\alpha^*$, and the counit $\alpha_!\alpha^*(\xi)\to\xi$ at a covering $\xi:E\to A$ may be identified with $\Cov_A(u_\alpha,\xi)\bullet u_\alpha\to \xi$. The right cancellability of connected morphisms shows that $U_\alpha$ is connected, so that the coproduct $\Cov_A(u_\alpha,\xi)\bullet U_\alpha$ is a coproduct of connected components indexed by the elements of the fibre $\alpha^*(\xi)$. Since $A$ is connected, the restriction of $\xi:E\to A$ to any connected component of $E$ is an epicovering so that by hypothesis (ii) the fibre $\alpha^*(\xi)=\Cov_A(u_\alpha,\xi)$ contains a point in any connected component of $E$.  Computing left adjoints along $\star_\Ee\to A\to\star_\Ee$ implies then that the counit induces a surjection on connected components $\pi_0(\Cov_A(u_\alpha,\xi)\bullet U_\alpha)\to\pi_0(E)$, i.e. an epicovering. By hypothesis (iii) any such is strongly epimorphic inside the category of coverings. This shows that the counit is pointwise a strong epimorphism in $\Cov_A$.

Note that since by hypothesis (i) all objects are locally connected, distinct connected components are disjoint. Therefore, since by Corollary \ref{complement} subobjects in $\Cov_A$ are complemented, general coproducts in $\Cov_A$ are disjoint as well. Moreover, for each morphism $f:A\to B$, pullback $f^*:\Cov_A\to\Cov_B$ preserves initial objects and hence disjointness. This applies in particular to the fibre functor $\alpha^*$.

For (2) observe that the universal covering $u_\alpha$ is a principal covering. Indeed, since $\Cov_A(u_\alpha,-)$ represents $\alpha^*$ we get $\Cov_A(u_\alpha,u_\alpha)=\Aut(u_\alpha)=\pi_1(A,\alpha)$, cf. the proof of Proposition \ref{monadicity}. The counit at $u_\alpha$ is given by $\Aut(u_\alpha)\bullet U_\alpha\to U_\alpha$ which extends to the action-map $\Aut(u_\alpha)\bullet U_\alpha\to U_\alpha\times_AU_\alpha$. The latter is invertible since it induces (under the conservative fibre functor) the invertible action-map of the regular representation of $\pi_1(A,\alpha)$. It is now readily verified that for every $\pi_1(A,\alpha)$-set $X$ the Borel construction $X\times_{\pi_1(A,\alpha)}u_\alpha$ has the universal property of $\phi_!(X)$, and hence the left adjoint $\phi_!:\Set^{\pi_1(A,\alpha)}\to\Cov_A$ exists by hypothesis (iv).

For (3) it suffices to show that the unit $X\to\phi^*\phi_!(X)$ is invertible for any $\pi_1(A,\alpha)$-set $X$. But $\phi^*$ is just $\alpha^*$ equipped with its canonical $\pi_1(A,\alpha)$-action. Therefore, the unit $X\to\phi^*\phi_!(X)$ may be identified with the discrete Borel construction $X\to X\times_{\pi_1(A,\alpha)}\pi_1(A,\alpha)$ which is invertible.\end{proof}

\begin{rmk}It is surprising how little extra-conditions are needed to ensure that a discrete comprehension scheme takes values in categories of $G$-sets for discrete groups $G$. In virtue of Corollary \ref{complement} our hypothesis (iii) is precisely axiom (G3) of Grothendieck's axioms (G1)-(G6) characterising categories of sets with a continuous action by a profinite group, cf. \cite[chapter V.4]{Gr}. The four hypotheses of Theorem \ref{descent} are satisfied by the categories of (well-behaved) topological spaces, simplicial sets and groupoids equipped with the comprehension schemes of Sections \ref{topological}, \ref{simplicial} and \ref{groupoid}, but hypotheses (ii) and (iii) fail for the category of small categories with respect to both comprehension schemes $P,P'$ of Section \ref{Cat}.\end{rmk}

\begin{rmk}We end this article with a few pointers to literature where Galois-theoretical ideas are potentially related to suitable comprehension schemes.

Street and Verity \cite{SV} define comprehensive factorisation in a $2$-categorical setting and express principal coverings by means of an internal notion of \emph{torsor}.

Barr and Diaconescu \cite{BD} introduce the notion of a \emph{locally simply connected} topos. In view of Moerdijk's representation theorem for \emph{Galois toposes}  \cite{Moe}, and by analogy with the topological case, it is tempting to conjecture that there is a comprehension scheme assigning to a locally (simply) connected Grothendieck topos the Galois topos of locally constant objects therein. If this is the case then the corresponding comprehensive factorisation of a geometric morphism should be of interest.

Funk and Steinberg \cite{FS} construct a universal covering topos for each \emph{inverse semigroup} with a concrete interpretation of the associated fundamental group. We conjecture that their construction derives from a suitable comprehension scheme.

Janelidze \cite{Ja} defines Galois theory in terms of a given reflective subcategory (an axiomatisation of the full subcategory of ``discrete'' objects). He developes an abstract notion of \emph{covering extension} which subsumes the topological coverings and the central extensions in algebra \cite{JK} as special cases, and obtains a Galois-type classification for covering extensions with fixed codomain. It would be interesting to relate this axiomatic Galois theory to an existing comprehension scheme.\end{rmk}

\section*{Acknowledgements}

We thank the Max-Planck-Institute for Mathematics in Bonn and the Department of Mathematics of the University of Nice for their support and hospitality. The original seed for this work was set during several visits of RK in Nice. The trimester in 2016 on Higher Structures in Geometry and Physics at the MPIM in Bonn was an indispensable catalyst. We are especially grateful to Michael Batanin and Ross Street for inspiring discussions and to the referee for helpful comments.

RK would like to thank Yuri I. Manin for his continued support and interest. He thankfully acknowledges support from the Simons foundation under collaboration grant \# 317149.

\end{document}